\documentclass[11pt]{article}
\usepackage{amsmath,amssymb,amsthm}
\usepackage{enumerate}
\usepackage[mathscr]{eucal}
\usepackage[all,cmtip]{xy}
\usepackage{csquotes}
\usepackage{times}
\usepackage{hyperref}
\usepackage{enumerate}
\usepackage{float}

\usepackage{subcaption}
\makeatletter

\RequirePackage[dvips]{graphicx} \textheight 16cm
\usepackage{graphicx}
\usepackage{color}
\newcommand{\singlespacing}{\let\CS=\@currsize\renewcommand{\baselinestretch}{1}\tiny\CS}
\newcommand{\oneandahalfspacing}{\let\CS=\@currsize\renewcommand{\baselinestretch}{1.25}\tiny\CS}
\newcommand{\doublespacing}{\let\CS=\@currsize\renewcommand{\baselinestretch}{1.35}\tiny\CS}

\newtheorem{Theorem}{Theorem}[section]
\newtheorem{Definition}[Theorem]{Definition}

\newtheorem{Lemma}[Theorem]{Lemma}
\newtheorem{Proposition}[Theorem]{Proposition}
\newtheorem{Corollary}[Theorem]{Corollary}
\newtheorem{Remark}[Theorem]{Remark}
\newtheorem{Note}[Theorem]{Note}
\newtheorem{Example}[Theorem]{Example}

\begin{document}

\newcommand{\la}{\lambda}
\newcommand{\si}{\sigma}
\newcommand{\ol}{1-\lambda}
\newcommand{\be}{\begin{equation}}
\newcommand{\ee}{\end{equation}}
\newcommand{\bea}{\begin{eqnarray}}
\newcommand{\eea}{\end{eqnarray}}
\newcommand{\nn}{\nonumber}
\newcommand{\mc}{\multicolumn}
\newcommand{\bee}{\begin{eqnarray*}}
\newcommand{\eee}{\end{eqnarray*}}
\newcommand{\lb}{\label}
\newcommand{\D}{${\bf{D~}}$}
\newcommand{\C}{${\bf{C~}}$}
\newcommand{\nii}{\noindent}
\newcommand{\ii}{\indent}
\newcommand{\0}{${\bf 0}$}
\newcommand{\bpi}{\mbox{\boldmath$\pi$}}
\newcommand{\w}{\omega}
\newcommand{\bw}{\mbox{\boldmath$\omega$}}
\newcommand{\bet}{\mbox{\boldmath$
\beta$}}
\newcommand{\al}{\mbox{\boldmath$\alpha$}}
\newcommand{\n}{\mbox{\boldmath$\nu$}}
\singlespacing
\doublespacing
\title{ {\bf On local function, an algebraic approach}}
\author{ Monoj Kumar Das, Shyamapada Modak$^*$\\
 Department of  Mathematics, University of Gour Banga\\ Malda 732103, West Bengal, India \\
e-mail: spmodak2000@yahoo.co.in (S. Modak)\\ dmonojkr1@gmail.com (M. K. Das)\\
Orcid Id:  0009-0007-2107-9984 (M. K. Das)\\ 0000-0002-0226-2392 (S. Modak)}
\date{}

 \maketitle{}

 {\bf Abstract}:
The paper discuss the limit point concept of a subset in a group via ideal of the power set ring. This idea along with anti-ideal give the topological structure in a group. Homomorphic images of both ideal and anti-ideal are played the remarkable role to change the topological structure from one system to another system.

{\bf Mathematics Subject Classification (2020)}: 20C05, 20B35, 20K30, 54A05

{\bf Key words and phrases}:  Ideal, Anti-ideal, Group, Power set ring, Homomorphism

\section{\bf Introduction and Preliminaries }
Group and ring theories are known to us from 19th century. The ideal on a set $X$ is primarily defined by Kurtatowski \cite{KK} and Vaidyanathaswamy \cite{Vaidyanathaswamy}. They,  and a good number of mathematicians \cite{Hayasi1964,JANKOVIC,Jankovic2015,JH2023,KK,MIAH2025,Modak2012, Newcomb, SELIM} are discussed the ideals on the various field of mathematics \cite{KULCHHUM2023,JH2023, MS2023, SKJ2024, MN2007,KHATUN25,KHATUN2025}. However, for these discussion they used topology on the same set. The authors Das et. al \cite{DAS} have shown that this ideal is same the ideal on the power set ring. For a set $G$, $(2^G, \Delta, \cap)$ or simple  $2^G$, the power set ring (means, the set $2^G$ with two binary operation $\Delta$ and $\cap$, where `$\Delta$' is defined as $A\Delta B = (A\setminus B) \cup (B\setminus A)$).
We through this paper, consider the local function upon a group via ideal of the power set ring $2^G$, where $G$ is a group. Homomorphism and kernel will take a major role of this paper. Further we represent a new idea on a ring, which is called anti-ideal. Our another job through this paper is, anti-ideal's reflection on a group and its consequences.

Following lemma is an useful tool for the paper:

\begin{Lemma} \cite{DAS}
\label{-01}
Let $G$ be a group and $\mathcal{I}$ be an ideal on the power set ring $(2^G, \Delta, \cap)$. Then
 $\mathcal{I}$  is synonymous with the Kuratowski ideal on $G$.
\end{Lemma}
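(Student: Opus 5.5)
We will show that a nonempty family $\mathcal{I}\subseteq 2^G$ is a ring ideal of $(2^G,\Delta,\cap)$ exactly when it is a Kuratowski ideal on $G$, meaning it is closed under subsets (heredity) and finite unions (additivity). Throughout we use that $\emptyset$ is the additive identity of $2^G$, that every $A$ is its own additive inverse since $A\Delta A=\emptyset$, that $G$ is the multiplicative identity, and that multiplication is $\cap$. The group structure on $G$ plays no role; the argument is purely set-theoretic.

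\emph{Ring ideal $\Rightarrow$ Kuratowski ideal.} Let $\mathcal{I}$ be a ring ideal. It is nonempty, so it contains $\emptyset = A\Delta A$. For heredity, take $A\in\mathcal{I}$ and $B\subseteq A$. Then $B=B\cap A$, and since $\mathcal{I}$ absorbs multiplication by arbitrary elements of $2^G$, we get $B\in\mathcal{I}$. For additivity, take $A,B\in\mathcal{I}$. We use the identity
\[
A\cup B=(A\Delta B)\,\Delta\,(A\cap B).
\]
Here $A\Delta B\in\mathcal{I}$ because $\mathcal{I}$ is an additive subgroup, and $A\cap B\in\mathcal{I}$ by absorption. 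Closing under $\Delta$ once more gives $A\cup B\in\mathcal{I}$.

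\emph{Kuratowski ideal $\Rightarrow$ ring ideal.} Let $\mathcal{I}$ be a Kuratowski ideal. It contains $\emptyset$, since $\emptyset$ is a subset of any member. Each element is its own additive inverse, so to be an additive subgroup it suffices that $\mathcal{I}$ is closed under $\Delta$. This holds because $A\Delta B\subseteq A\cup B\in\mathcal{I}$, and heredity then gives $A\Delta B\in\mathcal{I}$. For absorption, take any $C\in 2^G$; then $C\cap A\subseteq A$, so $C\cap A\in\mathcal{I}$ by heredity. Since $\cap$ is commutative, one-sided absorption is enough.

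The only step that is not immediate is recovering closure under unions from the ring axioms, which is handled by the identity $A\cup B=(A\Delta B)\Delta(A\cap B)$. It can be checked pointwise: an element of $A\cup B$ lies in exactly one of $A\Delta B$ and $A\cap B$, and an element outside $A\cup B$ lies in neither. A secondary point is the conventions. If the Kuratowski definition requires $\emptyset\in\mathcal{I}$, this follows from nonemptiness as shown above. If the ring-ideal definition allows the improper ideal $2^G$, it corresponds to the improper Kuratowski ideal, so the correspondence holds in either convention.
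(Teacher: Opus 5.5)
Your proof is correct and complete. Heredity comes from $B=B\cap A$, additivity from $A\cup B=(A\Delta B)\Delta(A\cap B)$, and the converse from $A\Delta B\subseteq A\cup B$ and $C\cap A\subseteq A$; this is the standard argument. The paper gives no proof of this lemma (it is quoted from the earlier work of Das et al.), but it relies on the same identity $A\cup B=(A\Delta B)\Delta(A\cap B)$ in its own anti-ideal lemma and isomorphism theorem, so your route is essentially the intended one.
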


\section{Ideals on a group}
Local function or action of an ideal on a group is considered as follows:

\begin{Definition}
Let $G$ be a group and $A\subseteq G$ be any set. Define $A^{\propto}(\mathcal{I})(\text{or\;simply}\;A^{\propto})=\{g\in G: H\cap A\notin \mathcal{I},\;\forall\; H\in \mathcal{G}(g)\}$, where $\mathcal{G}(g)=\{H\subseteq G: \text{$H$ is a subgroup of $G$ and $g\in H$}\}$ and $\mathcal{I}$ is an ideal on $G$.
\end{Definition}

  Suppose $x\in A^{\propto}$, then for every subgroup $H$ containing $x$, $H\cap A\notin \mathcal{I}$. That is, $H\cap A\neq\varnothing$. One can consider the concept; $x$ is said to be a $\propto$-limit point of $A$ if $A$ intersects all subgroups containing $x$. This is called local function associated with group.

  Following are the properties and examples of this local function and its associated subjects.

We are starting with $A^{\propto}$ may not be a group (even if $A$ is a subgroup of $G$).

 \begin{Example}

  Consider the Klein $4$-group $G=K_{4}=\{e, a, b, c\}$ and $\mathcal{I}= \{\varnothing, \{c \}, \{e \}, \{e, c \} \}$. Then for the subgroup $A = \{e, a \}$, $a\in A^{\propto}$ but $e \notin A^{\propto}$.

 \end{Example}

 Even, if $\mathcal{I} = \mathcal{I}_{fin}$ on $2^G$, then $A^{^\propto}$ is not a subgroup in general.

 \begin{Example}
   Let $G=\mathbb{Z}$ and $\mathcal{I}=\mathcal{I}_{fin}$ (collection of all finite subsets of $G$). Then for the subset $A=\mathbb{Z}^{e}$, collection of even integers. Then $A^{\propto}=\mathbb{Z}\setminus \{0\}$ which is not a subgroup.
 \end{Example}

 \begin{Example}
   Let $G$ be any infinite group and $\mathcal{I}=\mathcal{I}_{fin}$. Take $A$ to be any finite subgroup (or subset) of $G$ of size greater than $2$ ( i.e $\{e,a\}$ with $a$ of order $2$). Then $A^{\propto}=\varnothing$.
 \end{Example}

\begin{Theorem}
  Let $G$ be a group and $g\in G$. Then $g\in A^{\propto}\iff <g>\cap A\notin\mathcal{I}$.
\end{Theorem}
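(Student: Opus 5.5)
The plan is to prove the two implications separately. The engine is the fact that $<g>$ is the smallest subgroup containing $g$, combined with the heredity of an ideal: if $B\in\mathcal{I}$ and $C\subseteq B$, then $C\in\mathcal{I}$. By Lemma \ref{-01}, an ideal on the power set ring $2^G$ is the same as a Kuratowski ideal on $G$, so we may use heredity freely. In ring language, $C = C\cap B$ is the ring product of $C$ with $B$, and the ring-ideal absorption property gives exactly this.

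For the forward direction, suppose $g\in A^{\propto}$. The cyclic subgroup $<g>$ is a subgroup of $G$ containing $g$, so $<g>\in\mathcal{G}(g)$. Applying the definition of $A^{\propto}$ with $H=<g>$ gives $<g>\cap A\notin\mathcal{I}$ immediately.

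For the converse, assume $<g>\cap A\notin\mathcal{I}$ and let $H\in\mathcal{G}(g)$ be arbitrary. Since $H$ is a subgroup containing $g$, it contains every integer power $g^n$, so $<g>\subseteq H$. Hence $<g>\cap A\subseteq H\cap A$. Suppose, for contradiction, that $H\cap A\in\mathcal{I}$. Heredity then forces $<g>\cap A\in\mathcal{I}$, contrary to the hypothesis. So $H\cap A\notin\mathcal{I}$ for every $H\in\mathcal{G}(g)$, which means $g\in A^{\propto}$.

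No step here is a real obstacle. The only point needing care is justifying heredity from the ring-theoretic definition of ideal rather than assuming the Kuratowski axioms directly. Lemma \ref{-01}, or the one-line identity $C = C\cap B$, handles this.
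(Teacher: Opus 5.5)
Your proof is correct and essentially identical to the paper's: you take $H=\langle g\rangle$ in the definition for the forward direction, and for the converse you use $\langle g\rangle\subseteq H$ together with heredity of $\mathcal{I}$. The one addition is that you justify heredity from ring absorption via $C=C\cap B$, which the paper only alludes to in its remark after the proof.
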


\begin{proof}
  Let $g\in A^{\propto}$. In particular $<g>\in \mathcal{G}(g)\Rightarrow <g>\cap A\notin \mathcal{I}$.\\
  Conversely, let $<g>\cap A\notin \mathcal{I}$, for $g\in G$. Then for any $H\in \mathcal{G}(g)$, $<g>\cap A\subseteq H\cap A\Longrightarrow H\cap A\notin\mathcal{I}\Longrightarrow g\in A^{\propto}$.
\end{proof}

Note that this property has been managed through the hereditary property of ideal.

\begin{Proposition}
 Let $A$ be a subset of a group $G$. Then, for $g\in A^{\propto}\Longrightarrow g^{n}\in A^{\propto}$, where $n\in\mathbb{Z}$.
\end{Proposition}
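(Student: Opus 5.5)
The natural route is to reduce everything to cyclic subgroups using the preceding Theorem. That Theorem says $g\in A^{\propto}$ if and only if $<g>\cap A\notin\mathcal{I}$, and likewise $g^{n}\in A^{\propto}$ if and only if $<g^{n}>\cap A\notin\mathcal{I}$. So the task is to pass from $<g>\cap A\notin\mathcal{I}$ to $<g^{n}>\cap A\notin\mathcal{I}$. The only tool the paper has used so far is the hereditary property of $\mathcal{I}$. That property transfers non-membership in $\mathcal{I}$ from a set to its \emph{supersets}, never to its subsets. Here the containment runs the wrong way: $<g^{n}>\subseteq <g>$, so $<g^{n}>\cap A\subseteq <g>\cap A$.

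This direction cannot be repaired, and I expect the statement as written to be false. A concrete counterexample settles it. Take $G=\mathbb{Z}$, $\mathcal{I}=\mathcal{I}_{fin}$, and $A$ the set of odd integers. Then $<1>\cap A=A$ is infinite, so $1\in A^{\propto}$ by the Theorem. But $1^{2}=2$ and $<2>\cap A=\varnothing\in\mathcal{I}$, so $2\notin A^{\propto}$. The case $n=0$ fails just as easily: $e\in A^{\propto}$ requires $\{e\}\cap A\notin\mathcal{I}$, which fails whenever $e\notin A$. So I would not attempt to prove the general claim. Instead I would establish the correct substitutes below.

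\textbf{Correct substitute for the stated direction.} If $g\in A^{\propto}$ and $<g^{n}>=<g>$, then $g^{n}\in A^{\propto}$. This covers in particular $n=\pm1$, and more generally $\gcd(n,|g|)=1$ when $g$ has finite order. The proof is one line from the Theorem, since the two cyclic subgroups coincide.

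\textbf{Converse direction, true for all $n$.} If $g^{n}\in A^{\propto}$, then $g\in A^{\propto}$. Any $H\in\mathcal{G}(g)$ contains $g^{n}$, so $\mathcal{G}(g)\subseteq\mathcal{G}(g^{n})$. Hence the condition $H\cap A\notin\mathcal{I}$ for all $H\in\mathcal{G}(g^{n})$ gives it for all $H\in\mathcal{G}(g)$. Equivalently, $<g^{n}>\cap A\subseteq<g>\cap A$ together with heredity of $\mathcal{I}$ gives the result.

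The main obstacle is therefore not technical. It is recognising that heredity gives the implication $g^{n}\in A^{\propto}\Rightarrow g\in A^{\propto}$, while the stated implication $g\in A^{\propto}\Rightarrow g^{n}\in A^{\propto}$ needs $<g^{n}>=<g>$.
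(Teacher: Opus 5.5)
Your refutation is correct. There is no paper proof to compare against, because the paper states this Proposition without any argument.

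Your counterexample is valid: $G=\mathbb{Z}$, $\mathcal{I}=\mathcal{I}_{fin}$, $A=2\mathbb{Z}+1$, with $1\in A^{\propto}$ but $2\notin A^{\propto}$. In fact the paper's own examples already contradict the statement:
\begin{enumerate}[(i)]
\item In the Klein four-group example with $A=\{e,a\}$, the paper has $a\in A^{\propto}$ while $a^{2}=e\notin A^{\propto}$.
\item In the example with $B=\mathbb{Z}^{o}$, the paper computes $B^{\propto}=2\mathbb{Z}+1$, which contains $1$ but not $2$.
\item For $A=\mathbb{Z}^{e}$, the paper gets $A^{\propto}=\mathbb{Z}\setminus\{0\}$, which contains $1$ but not $0\cdot 1$. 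This is your $n=0$ failure.
\end{enumerate}

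Your diagnosis is also the right one. Since $\mathcal{G}(g)\subseteq\mathcal{G}(g^{n})$, equivalently $\langle g^{n}\rangle\cap A\subseteq\langle g\rangle\cap A$, heredity of $\mathcal{I}$ only gives $g^{n}\in A^{\propto}\Rightarrow g\in A^{\propto}$. So it is the complement $G\setminus A^{\propto}$ that is closed under powers. The stated direction holds only when $\langle g^{n}\rangle=\langle g\rangle$.

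The direction you prove is exactly what the paper uses later, in its proof of $(A^{\propto})^{\propto}\subseteq A^{\propto}$: from $y\in\langle x\rangle$ and $y\in A^{\propto}$ it concludes $x\in A^{\propto}$. This suggests the Proposition is a reversed statement of that converse.
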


\begin{Theorem}
\label{01}
 For two subsets $A$ and $B$ of  a group $G$ with $A\subseteq B$,  $A^{\propto}\subseteq B^{\propto}$.
\end{Theorem}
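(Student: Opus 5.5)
The plan is to argue elementwise and use only the definition of $A^{\propto}$ together with the hereditary property of the ideal $\mathcal{I}$. By Lemma \ref{-01}, $\mathcal{I}$ is a Kuratowski ideal on $G$, so it is closed under taking subsets: if $C\subseteq D$ and $D\in\mathcal{I}$, then $C\in\mathcal{I}$. In contrapositive form, if $C\subseteq D$ and $C\notin\mathcal{I}$, then $D\notin\mathcal{I}$. This is the only property of $\mathcal{I}$ the argument needs.

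Fix $g\in A^{\propto}$. The shortest route is the preceding theorem, which characterises membership by the cyclic subgroup: $g\in A^{\propto}$ means exactly that $<g>\cap A\notin\mathcal{I}$. Since $A\subseteq B$, we have $<g>\cap A\subseteq <g>\cap B$. Heredity in contrapositive form then gives $<g>\cap B\notin\mathcal{I}$. Applying the same theorem in the other direction yields $g\in B^{\propto}$. Since $g$ was arbitrary, $A^{\propto}\subseteq B^{\propto}$.

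If one prefers not to invoke that theorem, the same argument works directly from the definition. Take an arbitrary $H\in\mathcal{G}(g)$. Then $H\cap A\notin\mathcal{I}$, and $H\cap A\subseteq H\cap B$, so $H\cap B\notin\mathcal{I}$. As this holds for every $H\in\mathcal{G}(g)$, we get $g\in B^{\propto}$.

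No step is a genuine obstacle. The only point needing care is that heredity of $\mathcal{I}$ is available, and this is supplied by Lemma \ref{-01}, which identifies ideals of the power set ring $2^G$ with Kuratowski ideals. Ring-ideal absorption under $\cap$ also gives it directly: $C=C\cap D\in\mathcal{I}$ whenever $D\in\mathcal{I}$ and $C\subseteq D$.
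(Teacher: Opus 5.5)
Your proof is correct and is essentially the paper's: the paper argues exactly as in your direct version, taking an arbitrary $H\in\mathcal{G}(g)$ and using $H\cap A\subseteq H\cap B$ with heredity of $\mathcal{I}$ to get $H\cap B\notin\mathcal{I}$. Your primary route through $\langle g\rangle$ only adds the earlier cyclic-subgroup characterisation to the same heredity step.
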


\begin{proof}
  Let $g\in A^{\propto}$, then for all $H\in\mathcal{G}(g)$, $A\cap H\notin\mathcal{I}$.\\
  Now, $A\subseteq B \Longrightarrow A\cap H\subseteq B\cap H\Longrightarrow B\cap H\notin\mathcal{I}\Longrightarrow g\in B^{\propto}$.
\end{proof}

\begin{Theorem}
  Let $G$ be a group and $A, B\subseteq G$. Then, $(A\cup B)^{\propto}=A^{\propto}\cup B^{\propto}$.
\end{Theorem}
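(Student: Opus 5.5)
The plan is to prove the two inclusions separately, using Theorem \ref{01} for one direction and the characterization $g\in A^{\propto}\iff \langle g\rangle\cap A\notin\mathcal{I}$ (the theorem preceding Theorem \ref{01}) together with the additivity of ideals for the other.

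For $A^{\propto}\cup B^{\propto}\subseteq (A\cup B)^{\propto}$, I will simply note that $A\subseteq A\cup B$ and $B\subseteq A\cup B$. Theorem \ref{01} then gives $A^{\propto}\subseteq (A\cup B)^{\propto}$ and $B^{\propto}\subseteq (A\cup B)^{\propto}$, and taking the union yields the inclusion.

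For the reverse inclusion $(A\cup B)^{\propto}\subseteq A^{\propto}\cup B^{\propto}$, I will argue by contraposition. Suppose $g\notin A^{\propto}\cup B^{\propto}$. By the characterization theorem, $\langle g\rangle\cap A\in\mathcal{I}$ and $\langle g\rangle\cap B\in\mathcal{I}$. Since $\mathcal{I}$ is closed under finite unions (Lemma \ref{-01}, identifying it with a Kuratowski ideal; in ring terms, $X\cup Y=(X\Delta Y)\Delta(X\cap Y)$ lies in the ideal), we get
\[
\langle g\rangle\cap(A\cup B)=(\langle g\rangle\cap A)\cup(\langle g\rangle\cap B)\in\mathcal{I}.
\]
Applying the characterization once more, $g\notin (A\cup B)^{\propto}$.

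The only point needing care is this reduction to the single cyclic subgroup $\langle g\rangle$. Working directly with the definition, the failures for $A$ and for $B$ could a priori be witnessed by two different subgroups $H_1,H_2\in\mathcal{G}(g)$. Passing to $\langle g\rangle$, which is contained in every member of $\mathcal{G}(g)$, puts both on a common witness, and heredity of $\mathcal{I}$ justifies this. Everything else is routine.
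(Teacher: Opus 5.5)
Your proposal is correct and follows essentially the same route as the paper: Theorem \ref{01} gives $A^{\propto}\cup B^{\propto}\subseteq (A\cup B)^{\propto}$. For the other inclusion, you reduce to $\langle g\rangle$ via the cyclic-subgroup characterization and use closure of $\mathcal{I}$ under finite unions; the paper phrases this step as a contradiction rather than a contrapositive, which is a cosmetic difference.
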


\begin{proof}
Suppose $g\in (A\cup B)^{\propto}$ but $g\notin A^{\propto}\cup B^{\propto}$. Then, $g\notin A^{\propto}$ and $g\notin B^{\propto}\Longrightarrow <g>\cap A\in\mathcal{I}$ and $<g>\cap B\in\mathcal{I}\Longrightarrow (<g>\cap A)\cup (<g>\cap B)=<g>\cap (A\cup B)\in\mathcal{I}\Longrightarrow g\notin (A\cup B)^{\propto}$, contradiction. Thus, $g\in (A\cup B)^{\propto}\Longrightarrow g\in A^{\propto}\cup B^{\propto}$.

Reverse inclusion is obvious from the Theorem \ref{01}.

\end{proof}

\begin{Theorem}
  If $\mathcal{I}\subseteq \mathcal{J}$, where both  $\mathcal{I}$  and $\mathcal{J}$ are ideals on $2^{G}$,  then $A^{\propto}(\mathcal{J})\subseteq A^{\propto}(\mathcal{I})$.
\end{Theorem}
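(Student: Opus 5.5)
The plan is a direct element-chase from the definition of the local function, mirroring the proof of Theorem \ref{01} but with the inclusion acting on the ideals rather than on the sets. Fix $A\subseteq G$ and take an arbitrary $g\in A^{\propto}(\mathcal{J})$. By definition, for every $H\in\mathcal{G}(g)$ we have $H\cap A\notin\mathcal{J}$.

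The key step is the contrapositive of the inclusion $\mathcal{I}\subseteq\mathcal{J}$. If some $H\cap A$ were in $\mathcal{I}$, it would also be in $\mathcal{J}$, which is a contradiction. Hence $H\cap A\notin\mathcal{I}$ for every $H\in\mathcal{G}(g)$, which says exactly that $g\in A^{\propto}(\mathcal{I})$. Since $g$ was arbitrary, $A^{\propto}(\mathcal{J})\subseteq A^{\propto}(\mathcal{I})$.

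Alternatively, I could use the earlier characterization $g\in A^{\propto}\iff <g>\cap A\notin\mathcal{I}$, applied once for $\mathcal{J}$ and once for $\mathcal{I}$. This reduces everything to the single subgroup $<g>$: we have $<g>\cap A\notin\mathcal{J}$, so $<g>\cap A\notin\mathcal{I}$. Either route works. I would present the first, since it needs nothing beyond the definition.

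I do not expect any real obstacle. The only point needing care is the direction of the inclusion: a larger ideal makes the condition ``$\notin$'' harder to satisfy, so the local function shrinks. Note that no ideal axioms (heredity, finite additivity) are needed here; only the set inclusion $\mathcal{I}\subseteq\mathcal{J}$ is used.
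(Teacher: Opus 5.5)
Your proof is correct and follows the paper's argument: the paper notes only that every member of $\mathcal{I}$ is a member of $\mathcal{J}$, so $H\cap A\notin\mathcal{J}$ forces $H\cap A\notin\mathcal{I}$ for each $H\in\mathcal{G}(g)$. You are also right that your first route uses nothing beyond the inclusion $\mathcal{I}\subseteq\mathcal{J}$, with no ideal axioms.
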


Proof is obvious from the fact every member of  $\mathcal{I}$ are also the member of $\mathcal{J}$.

\begin{Remark}
  Let $G$ be a group and $A, B\subseteq G$, then $(A\cap B)^{\propto} \neq A^{\propto}\cap B^{\propto}$.
\end{Remark}

\begin{Example}
  Let $G=\mathbb{Z}$, $\mathcal{I}=\mathcal{I}_{fin}$. Take $A=\mathbb{Z}^{e}$ and $B=\mathbb{Z}^{o}$, collection of  odd integers. Then $A^{\propto}=\mathbb{Z}\setminus \{0\}$ and $B^{\propto}=2\mathbb{Z}+1$ and hence $A^{\propto}\cap B^{\propto}=2\mathbb{Z}+1$. But $(A\cap B)^{\propto}=(\{\varnothing\})^{\propto}=\varnothing$.
\end{Example}

\begin{Theorem}
  Let $G$ be a group and $\mathcal{I}=\{H:\; H\subseteq G \;\text{and}\; e\notin H\}$. Then, for any subset $A\subseteq G$, $A^{\propto}$ is a subgroup of $G$ iff $A^{\propto} = G$.

\end{Theorem}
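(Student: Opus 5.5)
The plan is to compute $A^{\propto}$ explicitly for this particular ideal, showing that it can only be $\varnothing$ or $G$. The equivalence then follows immediately, since $\varnothing$ is not a subgroup.

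First I would record that $\mathcal{I}=\{H\subseteq G: e\notin H\}$ is indeed an ideal on $2^G$, so that the earlier results apply.
\begin{itemize}
\item It contains $\varnothing$.
\item It is hereditary: a subset of a set missing $e$ also misses $e$.
\item It is closed under finite unions: if $e\notin H_1$ and $e\notin H_2$, then $e\notin H_1\cup H_2$.
\end{itemize}
Equivalently, by Lemma \ref{-01}, it is closed under $\Delta$ and under intersection with arbitrary subsets. The key observation is that for any $B\subseteq G$ we have $B\notin\mathcal{I}$ if and only if $e\in B$.

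Next I would invoke the theorem characterising membership via cyclic subgroups: $g\in A^{\propto}$ if and only if $<g>\cap A\notin\mathcal{I}$. By the observation above, this holds if and only if $e\in <g>\cap A$. Since $e\in <g>$ for every $g\in G$, this is equivalent simply to $e\in A$, and the condition no longer depends on $g$. This yields the dichotomy
\[
A^{\propto}=\begin{cases} G, & e\in A,\\ \varnothing, & e\notin A.\end{cases}
\]

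Finally, the equivalence follows.
\begin{itemize}
\item If $A^{\propto}=G$, it is trivially a subgroup.
\item Conversely, if $A^{\propto}$ is a subgroup, then it is nonempty (it contains $e$), so by the dichotomy $A^{\propto}\neq\varnothing$ forces $A^{\propto}=G$.
\end{itemize}
There is no real obstacle. The only point needing care is the reduction of $<g>\cap A\notin\mathcal{I}$ to $e\in A$, which uses that every cyclic subgroup contains the identity. As a by-product, this argument also shows that $A^{\propto}$ is a subgroup exactly when $e\in A$.
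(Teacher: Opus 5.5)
Your proof is correct and rests on the same observation as the paper's: every subgroup contains $e$, so $K\cap A\notin\mathcal{I}$ if and only if $e\in A$, regardless of $g$ and $K$. Your dichotomy $A^{\propto}\in\{\varnothing,G\}$ is slightly more careful than the paper's argument, which asserts $e\in K\cap A$ without saying that $e\in A$ follows from $A^{\propto}$ being nonempty (for instance, from $e\in A^{\propto}$).
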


\begin{proof}
  Suppose $A^{\propto}$ is a subgroup of $G$. For $g\in G$ and any subgroup $K$ with $g\in K$, we have $e\in K\cap A\Longrightarrow K\cap A\notin \mathcal{I}\Longrightarrow g\in A^{\propto}$. Conversely, if $A^{\propto}=G$, then obviously it is a subgroup.
\end{proof}

\begin{Corollary}
  \begin{enumerate}[(i)]
    \item If $\{e\}\in \mathcal{I}$ (respectively $H\in \mathcal{I}$ with $e\in H$), then $e\notin A^{\propto}$ for every $A$, So $A^{\propto}$ is never a subgroup.
    \item If $\mathcal{I}=\{\varnothing \}$, then $\{e\}\notin\mathcal{I}$. If $e\in A$, then $A^{\propto}=G$, and if $e\notin A$, then $e\notin A^{\propto}$ and $A^{\propto}$ cannot be a subgroup of $G$.
  \end{enumerate}

\end{Corollary}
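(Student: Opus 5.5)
The key fact is that membership in $A^{\propto}$ can be tested on the cyclic subgroup alone: by the earlier theorem, $g\in A^{\propto}\iff <g>\cap A\notin\mathcal{I}$. Both parts of the corollary then reduce to asking whether $<e>\cap A=\{e\}\cap A$ lies in $\mathcal{I}$, together with the observation that every subgroup of $G$ contains $e$.

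For (i), assume $\{e\}\in\mathcal{I}$. Fix any $A\subseteq G$. Since $<e>=\{e\}$, we have $<e>\cap A\subseteq\{e\}$. Heredity of $\mathcal{I}$ then gives $<e>\cap A\in\mathcal{I}$, so $e\notin A^{\propto}$ by the earlier theorem. In the parenthetical case, $H\in\mathcal{I}$ with $e\in H$, we have $\{e\}\subseteq H$, so again $\{e\}\in\mathcal{I}$ by heredity and the same argument applies. Every subgroup contains the identity, so $A^{\propto}$ is never a subgroup. The only care needed is with the degenerate set $A^{\propto}=\varnothing$: it fails to be a subgroup for exactly the same reason, so no separate case is required.

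For (ii), take $\mathcal{I}=\{\varnothing\}$, so a set lies in $\mathcal{I}$ exactly when it is empty, and clearly $\{e\}\notin\mathcal{I}$.
\begin{enumerate}[(a)]
\item If $e\in A$, then for every $g\in G$ we have $e\in <g>\cap A$. So $<g>\cap A\neq\varnothing$, that is, $<g>\cap A\notin\mathcal{I}$, and hence $g\in A^{\propto}$. Therefore $A^{\propto}=G$.
\item If $e\notin A$, then $<e>\cap A=\{e\}\cap A=\varnothing\in\mathcal{I}$, so $e\notin A^{\propto}$. As in (i), $A^{\propto}$ lacks the identity and cannot be a subgroup.
\end{enumerate}
Neither part has a genuine obstacle once the earlier theorem is invoked.
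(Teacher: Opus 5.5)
Your proof is correct and matches the argument the paper intends. The paper gives no separate proof: it treats the corollary as immediate from the criterion $g\in A^{\propto}\iff <g>\cap A\notin\mathcal{I}$, applied at $g=e$ with $<e>=\{e\}$. It combines this with heredity of $\mathcal{I}$ and the observation from the preceding theorem's proof that $e$ lies in every subgroup, which is exactly how you handle both parts.
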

\begin{Theorem}
  Let $f:G\rightarrow G$ be group homomorphism and $\mathcal{I}$ is an ideal on $2^{G}$. Then,
  \begin{enumerate}[(i)]
    \item $f(\mathcal{I}) = \{f(I):\; I\in \mathcal{I} \}$ is an ideal on $G$.
    \item for any subset $A$ of $G$, $f(A^{\propto}(\mathcal{I}))\subseteq(f(A))^{\propto}(f(\mathcal{I}))$.
  \end{enumerate}

\end{Theorem}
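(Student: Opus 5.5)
For part (i) I would check the axioms of an ideal on $2^G$ directly. By Lemma \ref{-01} these are the Kuratowski axioms: containing $\varnothing$, heredity, and closure under finite unions. First, $\varnothing=f(\varnothing)\in f(\mathcal{I})$. Second, for finite unions, $f(I_1)\cup f(I_2)=f(I_1\cup I_2)$ with $I_1\cup I_2\in\mathcal{I}$. Third, for heredity, let $B\subseteq f(I)$ with $I\in\mathcal{I}$, and put $J=I\cap f^{-1}(B)$. Then $J\in\mathcal{I}$ by heredity of $\mathcal{I}$. Moreover $f(J)=B$, because every $b\in B$ has a preimage in $I$ and that preimage lies in $f^{-1}(B)$. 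Hence $B\in f(\mathcal{I})$. This part needs nothing about $f$ being a homomorphism.

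For part (ii) I would use the earlier characterization $g\in A^{\propto}\iff <g>\cap A\notin\mathcal{I}$ together with $f(<g>)=<f(g)>$. Take $g\in A^{\propto}$ and suppose, for contradiction, that $<f(g)>\cap f(A)=f(J)$ for some $J\in\mathcal{I}$. Then
$$f(<g>\cap A)\subseteq f(<g>)\cap f(A)=f(J),$$
so $<g>\cap A\subseteq f^{-1}(f(J))$. The plan is to conclude $<g>\cap A\in\mathcal{I}$ by heredity, which contradicts $g\in A^{\propto}$.

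The main obstacle is exactly this last step. In general $f^{-1}(f(J))=J\ker f$ need not belong to $\mathcal{I}$, and in fact the claim fails without an extra hypothesis. Take $G=\mathbb{Z}_2=\{e,a\}$, let $f$ be the trivial homomorphism, and let $\mathcal{I}=\{\varnothing,\{e\}\}$ with $A=\{a\}$. Then $a\in A^{\propto}$, since $<a>\cap A=\{a\}\notin\mathcal{I}$. On the other hand $f(A)=\{e\}$ and $f(\mathcal{I})=\mathcal{I}$, and $<e>\cap\{e\}=\{e\}\in f(\mathcal{I})$, so $f(a)=e\notin (f(A))^{\propto}(f(\mathcal{I}))$.

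I would therefore prove (ii) under the additional assumption that $f$ is injective. More generally, it suffices that $\mathcal{I}$ be closed under $J\mapsto J\ker f$. When $f$ is injective we have $f^{-1}(f(J))=J\in\mathcal{I}$, so the argument above closes at once. Equivalently, injectivity gives $f(<g>\cap A)=<f(g)>\cap f(A)$, and $f(X)\in f(\mathcal{I})$ holds iff $X\in\mathcal{I}$.
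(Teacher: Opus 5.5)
Your argument is correct, and on part (ii) it is the paper's proof, not yours, that breaks. For (i) the paper only cites the literature. Your direct check, using $J=I\cap f^{-1}(B)$ for heredity, is a complete proof and uses no group structure.

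For (ii) the paper follows exactly your chain. From $x\in A^{\propto}$ it gets $\langle x\rangle\cap A\subseteq f^{-1}(\langle y\rangle\cap f(A))$, hence $f^{-1}(\langle y\rangle\cap f(A))\notin\mathcal{I}$, and then concludes $\langle y\rangle\cap f(A)\notin f(\mathcal{I})$. That last step would need $B\in f(\mathcal{I})\Rightarrow f^{-1}(B)\in\mathcal{I}$. This fails as soon as some $J\in\mathcal{I}$ has $f^{-1}(f(J))=J\ker f\notin\mathcal{I}$, which is precisely the obstacle you isolated.

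Your counterexample is valid:
\begin{itemize}
\item $\{\varnothing,\{e\}\}$ is closed under $\Delta$ and under intersection with arbitrary subsets, so it is an ideal of $2^{\mathbb{Z}_2}$;
\item $A^{\propto}=\{a\}$ and $f(\mathcal{I})=\mathcal{I}$;
\item $\{e\}^{\propto}=\varnothing$, because both subgroups of $\mathbb{Z}_2$ meet $\{e\}$ in $\{e\}\in\mathcal{I}$.
\end{itemize}
In the paper's chain this is $B=\{e\}=f(\{e\})\in f(\mathcal{I})$ while $f^{-1}(B)=\mathbb{Z}_2\notin\mathcal{I}$. So part (ii) is false as stated.

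Your salvaged version is correctly proved: inclusion holds when $f$ is injective, or more generally when $J\ker f\in\mathcal{I}$ for all $J\in\mathcal{I}$. Inclusion is also the most one should claim there. The paper's own example, $f(n)=2n$ on $\mathbb{Z}$ with $\mathcal{I}_{fin}$ and $A=2\mathbb{Z}$, has $f$ injective yet gives $f(A^{\propto})=2\mathbb{Z}\setminus\{0\}\subsetneq\mathbb{Z}\setminus\{0\}=(f(A))^{\propto}(f(\mathcal{I}_{fin}))$.
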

\begin{proof}
$(i)$ Obvious from \cite{SELIM,JH2023,JS2023}.

$(ii)$ Let $y\in f(A^{\propto}(\mathcal{I}))$, then there exists $x\in A^{\propto}(\mathcal{I})$ such that $y=f(x)$. Then $<x>\cap A\notin \mathcal{I}$.\\ Now, $f(<x>\cap A)\subseteq f(<x>)\cap f(A)=<y>\cap f(A)\Longrightarrow <x>\cap A\subseteq f^{-1}(<y>\cap f(A))\Longrightarrow f^{-1}(<y>\cap f(A))\notin\mathcal{I}\Longrightarrow <y>\cap f(A)\notin f(\mathcal{I})\Rightarrow y\in (f(A))^{\propto}(f(\mathcal{I}))$. Therefore, $f(A^{\propto}(\mathcal{I}))\subseteq(f(A))^{\propto}(f(\mathcal{I}))$.
\end{proof}

Converse of the part $(ii)$ in the above theorem is not true in general:

\begin{Example}
  Take $G=\mathbb{Z}$, $\mathcal{I}=\mathcal{I}_{fin}$. Define $f:G\rightarrow G$ by $f(n)=2n$. Take $A=2\mathbb{Z}$, $A^{\propto}(\mathcal{I}_{fin})=\mathbb{Z}\setminus\{0\}\Rightarrow f(A^{\propto}(\mathcal{I}_{fin}))=2\mathbb{Z}\setminus \{0\}$. Again, $(f(A))^{\propto}(f(\mathcal{I}_{fin}))=(f(2\mathbb{Z}))^{\propto}(f(\mathcal{I}_{fin}))=(4\mathbb{Z})^{\propto}
  (f(\mathcal{I}_{fin}))=\mathbb{Z}\setminus \{0\}$.
\end{Example}

\begin{Theorem}
  Let $f:G\rightarrow G$ be an injective group homomorphism on $G$ and $\mathcal{I}$ be an ideal on $2^{G}$. Then for any subset $A$ of $G$, $f(A^{\propto}(\mathcal{I}))=(f(A))^{\propto}(f(\mathcal{I}))$.
\end{Theorem}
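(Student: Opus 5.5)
The inclusion $f(A^{\propto}(\mathcal{I}))\subseteq (f(A))^{\propto}(f(\mathcal{I}))$ is part $(ii)$ of the preceding theorem, so only the reverse inclusion needs work. Throughout I will use the characterization from the earlier theorem: $g\in A^{\propto}$ if and only if $<g>\cap A\notin\mathcal{I}$. This reduces everything to comparing cyclic subgroups.

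\textbf{Points in the image.} Let $y\in (f(A))^{\propto}(f(\mathcal{I}))$ and suppose $y=f(x)$ for some $x\in G$. Since $f$ is a homomorphism, $f(<x>)=<y>$. Since $f$ is injective, images commute with intersections, so
$$<y>\cap f(A)=f(<x>)\cap f(A)=f(<x>\cap A).$$
Now suppose, for contradiction, that $<x>\cap A\in\mathcal{I}$. Then $<y>\cap f(A)=f(<x>\cap A)$ lies in $f(\mathcal{I})$, contradicting $y\in (f(A))^{\propto}(f(\mathcal{I}))$. Hence $<x>\cap A\notin \mathcal{I}$, so $x\in A^{\propto}(\mathcal{I})$ and $y\in f(A^{\propto}(\mathcal{I}))$.

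Injectivity is also what makes membership in $f(\mathcal{I})$ faithful. If $f(B)=f(J)$ with $J\in\mathcal{I}$, then $B=J$. So the argument can equally be run in the contrapositive form $f(B)\notin f(\mathcal{I})\iff B\notin\mathcal{I}$, without appealing to hereditary closure.

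\textbf{Main obstacle: points outside $f(G)$.} The difficulty is an element $y\in (f(A))^{\propto}(f(\mathcal{I}))$ with $y\notin f(G)$. Then $<y>\cap f(A)\subseteq f(G)$ can fail to lie in $f(\mathcal{I})$ even though $y$ has no preimage. My first attempt will be to show that such $y$ cannot occur. However, the map $f(n)=2n$ on $\mathbb{Z}$ with $A=\mathbb{Z}$ and $\mathcal{I}=\mathcal{I}_{fin}$ seems to give a counterexample:
\begin{itemize}
\item $<1>\cap 2\mathbb{Z}=2\mathbb{Z}$ is infinite, so $2\mathbb{Z}\notin f(\mathcal{I}_{fin})$ and $1\in (f(A))^{\propto}(f(\mathcal{I}))$;
\item $1\notin 2\mathbb{Z}\supseteq f(A^{\propto}(\mathcal{I}))$.
\end{itemize}
This is consistent with the earlier example following the previous theorem.

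So I expect that the proof must additionally use that $f$ is onto, i.e.\ an automorphism of $G$. Equivalently, the equality can be read inside the image group $f(G)$, with the local function computed relative to subgroups of $f(G)$. Under either reading the first step covers every $y$, and the proof is complete. I would state this hypothesis explicitly when writing out the argument.
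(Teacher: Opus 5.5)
Your proposal is correct, and the obstacle you isolate is a genuine error in the statement, not a weakness of your approach. Your counterexample is valid. $f(\mathcal{I}_{fin})$ is exactly the family of finite subsets of $2\mathbb{Z}$, and $\langle y\rangle\cap 2\mathbb{Z}$ is infinite for every $y\neq 0$. Hence $(f(\mathbb{Z}))^{\propto}(f(\mathcal{I}_{fin}))=\mathbb{Z}\setminus\{0\}$, whereas $f(\mathbb{Z}^{\propto}(\mathcal{I}_{fin}))=2\mathbb{Z}\setminus\{0\}$.

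The paper's own example right after the preceding theorem ($f(n)=2n$, $A=2\mathbb{Z}$) uses an injective homomorphism and reports the same discrepancy, so the paper contradicts itself here. Its proof of the reverse inclusion breaks exactly where you say it must. It opens with ``let $y\in(f(A))^{\propto}(f(\mathcal{I}))$, then for $g\in G$, $f(g)=y$'', silently assuming $y\in f(G)$.

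After that point the paper argues as you do, except that it pushes forward an arbitrary subgroup $H\ni g$ to $f(H)\ni y$ instead of using only $\langle g\rangle$. Your cyclic version is shorter and gives the two-sided statement $f(x)\in(f(A))^{\propto}(f(\mathcal{I}))\iff x\in A^{\propto}(\mathcal{I})$ for all $x\in G$. That is, the correct result is $f(A^{\propto}(\mathcal{I}))=(f(A))^{\propto}(f(\mathcal{I}))\cap f(G)$. This yields the stated equality when $f$ is an automorphism, or when the local function on the right is computed inside $f(G)$.

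One minor point: you should not cite part (ii) of the preceding theorem for the forward inclusion, and you do not need to. As stated there for arbitrary homomorphisms it is false. Take the trivial homomorphism on $\mathbb{Z}$ with $\mathcal{I}=\mathcal{I}_{fin}$ and $A=\mathbb{Z}$: then $f(A^{\propto})=\{0\}$, while $f(\mathcal{I})=\{\varnothing,\{0\}\}$ forces $(f(A))^{\propto}(f(\mathcal{I}))=\varnothing$. The faulty step in its proof is $f^{-1}(\langle y\rangle\cap f(A))\notin\mathcal{I}\Rightarrow\langle y\rangle\cap f(A)\notin f(\mathcal{I})$, which requires injectivity.

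For injective $f$, your identity $\langle f(x)\rangle\cap f(A)=f(\langle x\rangle\cap A)$, together with $f(B)\in f(\mathcal{I})\iff B\in\mathcal{I}$, already proves both inclusions on $f(G)$. So your argument is self-contained without that citation.
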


\begin{proof}
  Let $y\in f(A^{\propto}(\mathcal{I}))$, then there exists $g\in A^{\propto}(\mathcal{I})$ such that $y=f(g)$. Take a  subgroup $K \subseteq f(G)$ with $y\in K$ and let $H=f^{-1}(K)$ a subgroup of $G$ containing $g$. Since $g\in A^{\propto}(\mathcal{I})$, $H\cap A\notin \mathcal{I}\Longrightarrow f(H\cap A)\notin f(\mathcal{I})\Longrightarrow f(H)\cap f(A)\notin f(\mathcal{I})\Longrightarrow K\cap f(A)\notin f(\mathcal{I})\Longrightarrow y\in (f(A))^{\propto}(f(\mathcal{I}))$. Thus, $f(A^{\propto}(\mathcal{I}))\subseteq(f(A))^{\propto}(f(\mathcal{I}))$.

  Let $y\in (f(A))^{\propto}(f(\mathcal{I}))$, then for $g\in G$, $f(g)=y$. For a subgroup $H$ containing $g$, $f(H)$ is a subgroup containing $y$, So $f(H)\cap f(A)\notin f(\mathcal{I})\Longrightarrow f(H\cap A)\notin f(\mathcal{I})$ (as $f$ is injective) $\Longrightarrow H\cap A\notin \mathcal{I}\Longrightarrow g\in A^{\propto}(\mathcal{I})\Longrightarrow f(g)\in f(A^{\propto}(\mathcal{I}))\Longrightarrow y\in f(A^{\propto})\Longrightarrow (f(A))^{\propto}(f(\mathcal{I}) \subseteq f(A^{\propto}(\mathcal{I}))$. Therefore $f(A^{\propto}(\mathcal{I}))=(f(A))^{\propto}(f(\mathcal{I}))$.
\end{proof}

\begin{Corollary}
  Suppose $G$ be a group and $f: G\rightarrow G$ be a group homomorphism. Then,
    $(ker f)^{\propto}=G$, if   $\{e\}\notin\mathcal{I}$.

\end{Corollary}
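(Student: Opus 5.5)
The plan is to argue pointwise, using the characterization of $A^{\propto}$ by cyclic subgroups, or directly from the definition. First I note that $\ker f$ is a subgroup of $G$, so in particular $e\in\ker f$. Now fix an arbitrary $g\in G$ and an arbitrary subgroup $H\in\mathcal{G}(g)$. Since every subgroup contains the identity, $e\in H$, and therefore $e\in H\cap \ker f$, that is, $\{e\}\subseteq H\cap\ker f$.

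The key step is to transfer the hypothesis $\{e\}\notin\mathcal{I}$ to the set $H\cap\ker f$ by the hereditary property of the ideal $\mathcal{I}$ (Lemma \ref{-01} lets us treat $\mathcal{I}$ as a Kuratowski ideal, so it is closed under subsets). If $H\cap\ker f$ belonged to $\mathcal{I}$, then its subset $\{e\}$ would also belong to $\mathcal{I}$, which contradicts the hypothesis. Hence $H\cap \ker f\notin\mathcal{I}$ for every $H\in\mathcal{G}(g)$, and so $g\in(\ker f)^{\propto}$.

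Alternatively, the same argument can be run with $H=<g>$ and the earlier theorem stating that $g\in A^{\propto}\iff <g>\cap A\notin\mathcal{I}$. Since $g$ was arbitrary, $G\subseteq(\ker f)^{\propto}$, and the reverse inclusion is trivial, which gives $(\ker f)^{\propto}=G$.

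There is no real obstacle here. The only point needing care is that the argument uses nothing about $f$ beyond the fact that $\ker f$ is a subgroup containing $e$; indeed, the same reasoning shows $A^{\propto}=G$ for any $A$ with $e\in A$ when $\{e\}\notin\mathcal{I}$. I would state this as a short remark, consistent with part (ii) of the preceding Corollary.
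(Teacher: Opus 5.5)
Your proof is correct and is essentially the argument the paper relies on. The paper gives no separate proof of this corollary; it rests on the observation already used in the proof of the theorem on $\mathcal{I}=\{H: e\notin H\}$ and in part (ii) of the preceding Corollary: $e$ lies in every $H\in\mathcal{G}(g)$ and in $\ker f$, so heredity of $\mathcal{I}$ together with $\{e\}\notin\mathcal{I}$ forces $H\cap\ker f\notin\mathcal{I}$. Your closing remark is also right: only $e\in\ker f$ is used, so in fact $A^{\propto}=G$ whenever $e\in A$ and $\{e\}\notin\mathcal{I}$, which is the general form of that observation.
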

\begin{Theorem}
 Let $G$ be a group and $\mathcal{I}$ be an ideal on $2^{G}$. Then, for any $A\subseteq G$ and $I\in\mathcal{I}$, $(A\cup I)^{\propto}=A^{\propto}=(A\setminus I)^{\propto}$.
\end{Theorem}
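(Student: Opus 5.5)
The plan is to reduce the statement to the finite additivity proved above together with monotonicity (Theorem \ref{01}), after first showing that members of $\mathcal{I}$ have empty local function. That is, the first step is to prove $I^{\propto}=\varnothing$ for every $I\in\mathcal{I}$. Indeed, for any $g\in G$ the cyclic subgroup $<g>$ belongs to $\mathcal{G}(g)$, and $<g>\cap I\subseteq I$. By the hereditary property of the ideal $\mathcal{I}$ (Lemma \ref{-01}), $<g>\cap I\in\mathcal{I}$, so the characterization theorem ($g\in A^{\propto}\iff <g>\cap A\notin\mathcal{I}$) gives $g\notin I^{\propto}$.

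For the first equality I apply the theorem $(A\cup B)^{\propto}=A^{\propto}\cup B^{\propto}$ with $B=I$. This gives $(A\cup I)^{\propto}=A^{\propto}\cup I^{\propto}=A^{\propto}\cup\varnothing=A^{\propto}$.

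For the second equality I use monotonicity twice. Since $A\setminus I\subseteq A$, Theorem \ref{01} gives $(A\setminus I)^{\propto}\subseteq A^{\propto}$. Conversely, $A\subseteq (A\setminus I)\cup I$, so again by Theorem \ref{01} and the union theorem,
\[
A^{\propto}\subseteq\bigl((A\setminus I)\cup I\bigr)^{\propto}=(A\setminus I)^{\propto}\cup I^{\propto}=(A\setminus I)^{\propto}.
\]
Both equalities then follow.

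Each step is essentially immediate, so there is no real obstacle. The only point to check is the first step, namely that $I^{\propto}=\varnothing$. This uses only heredity of $\mathcal{I}$ and the reduction to cyclic subgroups, so it holds even in the degenerate cases where $\{e\}\in\mathcal{I}$ or $\mathcal{I}=\{\varnothing\}$.
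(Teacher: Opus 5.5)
Your proof is correct, but it is organized differently from the paper's. The paper argues element-wise. It uses monotonicity for the easy inclusions. For the reverse inclusions it works directly with $\langle g\rangle\cap(A\cup I)=(\langle g\rangle\cap A)\cup(\langle g\rangle\cap I)$ and $\langle g\rangle\cap(A\setminus I)=(\langle g\rangle\cap A)\setminus(\langle g\rangle\cap I)$, re-running heredity and finite additivity of $\mathcal{I}$ by contradiction in each case.

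You instead isolate the single fact $I^{\propto}=\varnothing$. Both equalities then follow formally from monotonicity and the union theorem $(A\cup B)^{\propto}=A^{\propto}\cup B^{\propto}$, so the heredity/additivity work is done once, inside the union theorem, rather than repeated.

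Your route buys two things. It is shorter, and it shows the statement holds for any monotone, finitely additive operator that vanishes on members of $\mathcal{I}$. It also sidesteps a small slip in the paper's second part. There $\langle g\rangle\cap A$ is written as equal to $J\cup(\langle g\rangle\cap I)$, but only inclusion holds in general, since the right side is $\langle g\rangle\cap(A\cup I)$; heredity is what actually finishes that argument. The paper's version is self-contained relative to the cyclic-subgroup characterization and does not depend on the union theorem.

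Your second part can be shortened further. Since $(A\setminus I)\cup I=A\cup I$, applying your first equality to $A\setminus I$ gives $(A\setminus I)^{\propto}=(A\cup I)^{\propto}=A^{\propto}$ directly.
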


\begin{proof}
Let $I\in\mathcal{I}$ and $A\subseteq G$ be any subset of $G$. Since $A\subseteq (A\cup I)$, $A^{\propto}\subseteq (A\cup I)^{\propto}$.\\
  Let $g\in (A\cup I)^{\propto}$. Then $<g>\cap (A\cup I)\notin \mathcal{I}\Longrightarrow (<g>\cap A)\cup (<g>\cap I)\notin\mathcal{I}$. Suppose $(<g>\cap A)\in\mathcal{I}$. Now $<g>\cap I\subseteq I\Longrightarrow (<g>\cap A)\cup (<g>\cap I)\in\mathcal{I}$, contradiction and hence $<g>\cap A\notin\mathcal{I}\Longrightarrow g\in A^{\propto}$. Thus $(A\cup I)^{\propto}\subseteq A^{\propto}$. Therefore, $(A\cup I)^{\propto}= A^{\propto}$.

  For the second part:  $(A\setminus I)^{\propto}\subseteq A^{\propto}$ is obvious, as $(A\setminus I)\subseteq A$.

  Let $g\in A^{\propto}\Longrightarrow <g>\cap A\notin \mathcal{I}$. Now, $<g>\cap (A\setminus I)=<g>\cap A\setminus (<g>\cap I)$. Suppose $<g>\cap A\setminus (<g>\cap I)\in\mathcal{I}\Longrightarrow <g>\cap A=J\cup (<g>\cap I)\in\mathcal{I}$, contradiction. Thus, $<g>\cap (A\setminus I)\notin\mathcal{I}\Longrightarrow g\in(A\setminus I)^{\propto}$. Therefore, $A^{\propto}\subseteq (A\setminus I)^{\propto}$. Combining both, $(A\cup I)^{\propto}=A^{\propto}=(A\setminus I)^{\propto}$.
\end{proof}

\begin{Theorem}
Let $A$ be subset of a group $G$. Then $(A^{\propto})^{\propto} \subseteq A^{\propto}$.
\end{Theorem}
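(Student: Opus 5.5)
The plan is to argue elementwise, using the characterization $g\in A^{\propto}\iff <g>\cap A\notin\mathcal{I}$ from the Theorem above together with the hereditary property of $\mathcal{I}$. This is the same mechanism used in the proof of Theorem \ref{01}. Fix $g\in (A^{\propto})^{\propto}$; the goal is to show $<g>\cap A\notin\mathcal{I}$.

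\textbf{Step 1: pick a witness.} By the characterization theorem, $g\in (A^{\propto})^{\propto}$ gives $<g>\cap A^{\propto}\notin\mathcal{I}$. Since $\varnothing\in\mathcal{I}$ for every ideal, this set is nonempty, so we may choose $h\in <g>\cap A^{\propto}$. Then $h=g^{n}$ for some $n\in\mathbb{Z}$, and hence $<h>\subseteq <g>$.

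\textbf{Step 2: transfer to $g$ by heredity.} Because $h\in A^{\propto}$, the characterization theorem gives $<h>\cap A\notin\mathcal{I}$. From $<h>\subseteq <g>$ we get $<h>\cap A\subseteq <g>\cap A$. If $<g>\cap A$ were in $\mathcal{I}$, heredity would force $<h>\cap A\in\mathcal{I}$, a contradiction. Hence $<g>\cap A\notin\mathcal{I}$, that is, $g\in A^{\propto}$.

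Alternatively, Step 2 can be phrased with the original definition. Every $H\in\mathcal{G}(g)$ contains $h$, so $H\in\mathcal{G}(h)$, and therefore $H\cap A\notin\mathcal{I}$ directly.

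There is no real obstacle here. The only point needing care is Step 1: passing from "$<g>\cap A^{\propto}\notin\mathcal{I}$" to the existence of an element $h$. This uses $\varnothing\in\mathcal{I}$, which holds because $\mathcal{I}$ is a Kuratowski ideal by Lemma \ref{-01}.
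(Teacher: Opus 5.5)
Your proof is correct and is essentially the paper's argument. You pick a witness in $<g>\cap A^{\propto}$, which is nonempty since $\varnothing\in\mathcal{I}$, note that its cyclic subgroup lies in $<g>$, and use heredity to conclude $<g>\cap A\notin\mathcal{I}$; your explicit justification of the nonemptiness step is a detail the paper leaves implicit.
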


\begin{proof}
  Let $x \in  (A^{\propto})^{\propto}$. Then $<x> \cap A^{\propto} \notin \mathcal{I}$, and hence $<x> \cap A^{\propto} \neq \varnothing$. Suppose $y\in <x> \cap A^{\propto} $. Then $y\in <x>$ and $y\in A^{\propto}$. This implies that $<y> \cap A \notin \mathcal{I} \Longrightarrow <x>\cap A \notin \mathcal{I}$ (as, $<y> \subseteq <x>$). This implies that, $x\in A^{\propto}$. Hence,  $(A^{\propto})^{\propto} \subseteq A^{\propto}$.
\end{proof}

At the end of this section, following is an interesting lemma to construct a topology on a group.

\begin{Lemma}
  Let $\mathcal{I}$ be an ideal on the ring $2^G$, where $G$ is a group. Then the set operator, $\zeta: 2^G \rightarrow 2^G$ defined by
   $\zeta(A) = A\cup A^{\propto}$ satisfies the Kuratowski's closure axioms.
\end{Lemma}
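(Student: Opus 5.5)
We verify the four Kuratowski closure axioms for $\zeta(A)=A\cup A^{\propto}$ in turn, using the properties of the local function established earlier in this section.

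\emph{Empty set.} We must show $\zeta(\varnothing)=\varnothing$. For every $g\in G$ we have $<g>\cap\varnothing=\varnothing$. Since $\mathcal{I}$ is an ideal on the ring $2^G$, it contains the zero element $\varnothing$ of $(2^G,\Delta,\cap)$; by Lemma \ref{-01} this matches the Kuratowski ideal on $G$. Hence $g\notin\varnothing^{\propto}$ by the characterization $g\in A^{\propto}\iff <g>\cap A\notin\mathcal{I}$, so $\varnothing^{\propto}=\varnothing$ and $\zeta(\varnothing)=\varnothing$.

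\emph{Extensivity.} The inclusion $A\subseteq\zeta(A)$ holds by definition of $\zeta$.

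\emph{Finite additivity.} We must show $\zeta(A\cup B)=\zeta(A)\cup\zeta(B)$. This follows directly from the theorem $(A\cup B)^{\propto}=A^{\propto}\cup B^{\propto}$:
\[
\zeta(A\cup B)=A\cup B\cup A^{\propto}\cup B^{\propto}=\zeta(A)\cup\zeta(B).
\]

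\emph{Idempotence.} This is the main step: we must show $\zeta(\zeta(A))=\zeta(A)$. First, finite additivity gives
\[
\zeta(\zeta(A))=\zeta(A)\cup\big(A\cup A^{\propto}\big)^{\propto}=\zeta(A)\cup A^{\propto}\cup\big(A^{\propto}\big)^{\propto}.
\]
Next, the theorem $(A^{\propto})^{\propto}\subseteq A^{\propto}$ absorbs the last term into $A^{\propto}\subseteq\zeta(A)$, so $\zeta(\zeta(A))\subseteq\zeta(A)$. The reverse inclusion follows from extensivity. So the expected obstacle is already handled by the earlier result $(A^{\propto})^{\propto}\subseteq A^{\propto}$.

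The only point needing care is that the proof of that earlier theorem used $<x>\cap A^{\propto}\neq\varnothing$, which relies on $\varnothing\in\mathcal{I}$. This membership is guaranteed by the ideal axioms on the ring $2^G$, so the argument goes through. Monotonicity, via Theorem \ref{01}, could also be cited as a by-product, but it is not needed.
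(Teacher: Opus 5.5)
Your verification is correct: all four Kuratowski axioms follow as you describe from three facts, namely $\varnothing\in\mathcal{I}$, the identity $(A\cup B)^{\propto}=A^{\propto}\cup B^{\propto}$, and the inclusion $(A^{\propto})^{\propto}\subseteq A^{\propto}$. The paper states this lemma without proof, but places it immediately after those results, so your argument is evidently the intended one.
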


\begin{Corollary}
 Let $\mathcal{I}$ be an ideal on the ring $2^G$, where $G$ is a group. Then $\{ H \in 2^G:\;  \zeta(G\setminus H) = (G\setminus H) \}$ forms a topology on $G$.
\end{Corollary}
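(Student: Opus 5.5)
The plan is to obtain the corollary directly from the preceding Lemma, which says that $\zeta(A)=A\cup A^{\propto}$ satisfies Kuratowski's closure axioms. The standard result is that a Kuratowski closure operator $c$ on a set $X$ determines a topology whose closed sets are exactly the fixed points $\{F: c(F)=F\}$. The open sets of that topology are then the complements of these fixed points. The family in the statement, $\{H\in 2^G: \zeta(G\setminus H)=G\setminus H\}$, is precisely this family of complements, so it suffices to check the three open-set axioms for it. I would do this directly, so the argument does not rely on an external theorem.

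\textbf{Step 1: $\varnothing$ and $G$ are open.} For $H=G$ we need $\zeta(\varnothing)=\varnothing$. This holds because $\varnothing^{\propto}=\varnothing$: indeed $\langle g\rangle\cap\varnothing=\varnothing\in\mathcal{I}$ for every $g$. For $H=\varnothing$ we need $\zeta(G)=G$, which is immediate from $G\subseteq\zeta(G)\subseteq G$.

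\textbf{Step 2: finite intersections.} If $H_1,H_2$ are open, write $F_i=G\setminus H_i$. Then $G\setminus(H_1\cap H_2)=F_1\cup F_2$. Using the earlier theorem $(A\cup B)^{\propto}=A^{\propto}\cup B^{\propto}$, we get $\zeta(F_1\cup F_2)=\zeta(F_1)\cup\zeta(F_2)=F_1\cup F_2$.

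\textbf{Step 3: arbitrary unions.} This is the step needing care, since $\propto$ is only known to be finitely additive. Let $\{H_\alpha\}$ be open and set $F=\bigcap_\alpha F_\alpha$, where $F_\alpha=G\setminus H_\alpha$. Since $F\subseteq F_\alpha$, Theorem \ref{01} gives $F^{\propto}\subseteq F_\alpha^{\propto}\subseteq \zeta(F_\alpha)=F_\alpha$ for every $\alpha$. Hence $F^{\propto}\subseteq F$, and therefore $\zeta(F)=F$. So $\bigcup_\alpha H_\alpha=G\setminus F$ is open.

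Combining the three steps gives a topology. The only places the group structure and the ideal enter are the monotonicity of Theorem \ref{01} and the additivity theorem for $\propto$. The idempotence $(A^{\propto})^{\propto}\subseteq A^{\propto}$ is not needed to show that the family is a topology. It is needed only to show that $\zeta$ is the closure operator of this topology, which is the content of the preceding Lemma and is not required here.
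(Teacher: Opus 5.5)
Your proof is correct and follows the paper's route. The paper obtains the corollary directly from the preceding Lemma that $\zeta$ is a Kuratowski closure operator, and your Steps 1--3 are the standard closure-to-topology argument unwound via $\varnothing^{\propto}=\varnothing$, finite additivity and monotonicity of $(\cdot)^{\propto}$; your remark that idempotence, i.e.\ $(A^{\propto})^{\propto}\subseteq A^{\propto}$, is needed only to identify $\zeta$ as the closure of the resulting topology, not to obtain a topology, is also right.
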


\section{Anti-ideal on a ring}

In this section we shall introduce a new subset on a ring, and then we discuss the properties of this set through ideal.

\begin{Definition}
  Let $(R,+,.)$ be a ring. A subset $A\subseteq R$ is called an anti-ideal if
  \begin{enumerate}[(i)]
    \item for each $a, b\in A$, $a-b\notin A$
    \item for each $a, b\in A$ and $a\neq b$, $(a+b)+a.b \notin A$
  \end{enumerate}
  It is obvious that anti-ideal does not contain the identity element.
\end{Definition}

\begin{Example}
  For the ring $(\mathbb{Z},+,.)$, each singleton subset of $\mathbb{Z}$, $\{3, 5 \}$ and $\{3, 5, 7 \}$ are anti-ideals on $\mathbb{Z}$.
\end{Example}

\begin{Example}
Consider the ring $R=C[0,1]$ \cite{DAMIT2024,GILLMAN}, the set of all continuous functions on $[0, 1]$. Then $\{x,y\}=A$  is an anti-ideal on $R$, where
\begin{center}
  $x(t)=\begin{cases}
                                                              0, & \mbox{if }\; 0\leq t\leq\frac{1}{2} \\
                                                              t-\frac{1}{2}, & \mbox{if}\; \frac{1}{2}\leq t\leq 1.
                                                            \end{cases}$ \\  \& \\
                                                             $y(t)=\begin{cases}
                                                               \frac{1}{2}-t, & \mbox{if }\; 0\leq t\leq\frac{1}{2}\\
                                                            0 , & \mbox{if} \; \frac{1}{2} \leq t\leq 1.
                                                            \end{cases}$
\end{center}

\begin{figure}[h]
    \centering
    \begin{subfigure}{0.3\textwidth}
        \centering
        \includegraphics[width=\linewidth]{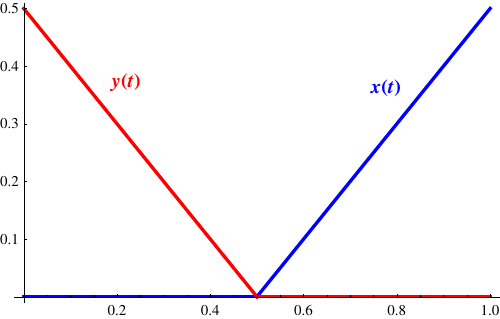}

    \end{subfigure}
    \hfill
    \begin{subfigure}{0.3\textwidth}
        \centering
        \includegraphics[width=\linewidth]{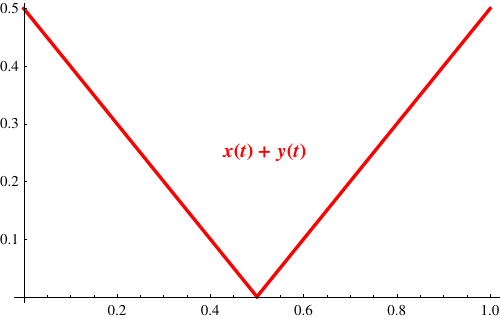}

    \end{subfigure}
    \hfill
    \begin{subfigure}{0.3\textwidth}
        \centering
        \includegraphics[width=\linewidth]{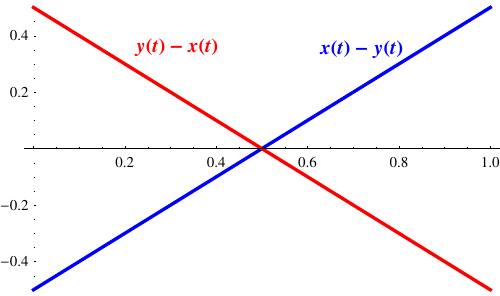}

    \end{subfigure}

    \caption{}
\end{figure}

\end{Example}

\newpage

The figure shows that addition and the compositions of addition and multiplication are not the form of $x(t)$ or $y(t)$.

Further, we will be mentioned that the anti-ideals on a ring are not limited, because for $a\in (0, 1)$, $\{x_a,\; y_a \}$ is an anti-ideal on $C[0,1]$, where

\begin{center}
  $x_a(t)=\begin{cases}
                                                              0, & \mbox{if }\; 0\leq t\leq a \\
                                                              t- a, & \mbox{if}\; a\leq t\leq 1.
                                                            \end{cases}$ \\  \& \\
                                                             $y_a(t)=\begin{cases}
                                                               a-t, & \mbox{if }\; 0\leq t\leq a\\
                                                            0 , & \mbox{if} \; a \leq t\leq 1 .
                                                            \end{cases}$
\end{center}

Note that the quotient with respect to an anti-ideal is not necessarily a ring again.

Anti-ideal of the power set ring $2^G$ has been characterized by the following lemma.

\begin{Lemma}
   ${}^{a}\mathcal{I}$ is an anti-ideal on the power set ring $2^{G}$ iff for $A\varsubsetneqq B\in {}^{a}\mathcal{I}, A\notin {}^{a}\mathcal{I}$ and for  distinct $A, B\in {}^{a}\mathcal{I}, A\cup B\notin {}^{a}\mathcal{I}$.
\end{Lemma}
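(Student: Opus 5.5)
The plan is to translate the two axioms of an anti-ideal directly into the language of the power set ring $(2^G,\Delta,\cap)$. In this ring the additive identity is $\varnothing$. Every element is its own additive inverse, since $A\Delta A=\varnothing$, so subtraction coincides with addition: $A-B=A\Delta B$. Multiplication is $\cap$. Hence condition (i) of the definition reads: for all $A,B\in{}^{a}\mathcal{I}$, $A\Delta B\notin{}^{a}\mathcal{I}$. Condition (ii) reads: for distinct $A,B\in{}^{a}\mathcal{I}$, $(A\Delta B)\Delta(A\cap B)\notin{}^{a}\mathcal{I}$. The first routine step is the standard identity $(A\Delta B)\Delta(A\cap B)=A\cup B$. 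It can be checked elementwise: a point in exactly one of $A,B$ lies in $A\Delta B$ but not in $A\cap B$, and a point in both lies in $A\cap B$ but not in $A\Delta B$. So condition (ii) is literally the second clause of the lemma.

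The remaining task is to match condition (i) with the clause ``$A\subsetneqq B\in{}^{a}\mathcal{I}\Rightarrow A\notin{}^{a}\mathcal{I}$''. As stated, (i) quantifies over pairs of members of ${}^{a}\mathcal{I}$. Taking $A=B$ gives $\varnothing\notin{}^{a}\mathcal{I}$. For $A\subsetneqq B$ we have $A\Delta B=B\setminus A$. I would read the lemma's first clause through this relative-complement lens: for $A\subsetneqq B$, the element $B\setminus A=B\Delta A$ is a ring difference of members, so (i) is the condition on such differences. I would argue both directions as follows.

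($\Rightarrow$) Assume ${}^{a}\mathcal{I}$ is an anti-ideal. Condition (ii) gives the union clause through the identity above. For the first clause, suppose $A\subsetneqq B$ with both $A$ and $B$ in ${}^{a}\mathcal{I}$. Then $B\setminus A=A\Delta B$ is excluded from ${}^{a}\mathcal{I}$. Using the union clause on the distinct members $A$ and $B$, the set $A\cup B=B$ cannot lie in ${}^{a}\mathcal{I}$, which is a contradiction. Hence $A\notin{}^{a}\mathcal{I}$. This is the cleanest route: it derives the chain clause from (ii), not from (i).

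($\Leftarrow$) Assume both clauses of the lemma. The union clause gives (ii) directly by the identity. For (i), take $A,B\in{}^{a}\mathcal{I}$. If $A=B$, then $A\Delta B=\varnothing$, and $\varnothing\notin{}^{a}\mathcal{I}$ because $\varnothing\subsetneqq A\in{}^{a}\mathcal{I}$, unless $A=\varnothing$ itself. The degenerate case ${}^{a}\mathcal{I}=\{\varnothing\}$ needs a separate remark. If $A\neq B$, I would try to show $A\Delta B\notin{}^{a}\mathcal{I}$ using $A\Delta B\subseteq A\cup B$ together with the chain clause.

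The main obstacle is this last step. Membership of $A\Delta B$ is not obviously controlled by the two clauses. If $C=A\Delta B$ were in ${}^{a}\mathcal{I}$, the union clause applied to $C$ and $A$ would give $C\cup A=A\cup B\notin{}^{a}\mathcal{I}$, which is no contradiction. My first attempt would instead apply the union clause to the pair $C,A$ and to the pair $C,B$, and combine the results with the chain clause when $A\subseteq B$. If that fails in general, I expect the intended reading of the lemma is that the paper identifies condition (i) with the chain clause by viewing $A\subsetneqq B$ as ``$A$ is a difference $B\Delta(B\setminus A)$''. In that case I would present the equivalence at that level and flag the gap explicitly.
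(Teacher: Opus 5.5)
Your translation ($A-B=A\Delta B$ and $(A\Delta B)\Delta(A\cap B)=A\cup B$) is correct, and so is your ``only if'' argument. It is the same as the paper's: the chain clause comes from the union clause, because $A\subsetneqq B$ with both in ${}^{a}\mathcal{I}$ would give $A\cup B=B\in{}^{a}\mathcal{I}$. The gap is the ``if'' direction for condition (i), and it cannot be filled because that direction is false. The union clause already implies the chain clause, so the right-hand side of the lemma is equivalent to condition (ii) alone, and the lemma would assert (ii)$\Rightarrow$(i). A counterexample: in any group $G$ with $|G|\geq 3$, pick distinct $x,y,z\in G$ and let ${}^{a}\mathcal{I}=\{\{x,y\},\{y,z\},\{x,z\}\}$.
\begin{itemize}
\item No member is a proper subset of another.
\item The union of any two distinct members is $\{x,y,z\}\notin{}^{a}\mathcal{I}$, so both clauses hold.
\item Yet $\{x,y\}\Delta\{y,z\}=\{x,z\}\in{}^{a}\mathcal{I}$, which violates (i).
\end{itemize}
The degenerate family you set aside is also a counterexample rather than a case needing a remark. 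For ${}^{a}\mathcal{I}=\{\varnothing\}$, both clauses hold vacuously, while $\varnothing\Delta\varnothing=\varnothing\in{}^{a}\mathcal{I}$.

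The paper's own argument for this direction has exactly the hole you ran into. It passes from $A\Delta B\in{}^{a}\mathcal{I}$ to $B\setminus A\in{}^{a}\mathcal{I}$, which silently assumes $A\subseteq B$. Under the chain clause, two members with $A\subseteq B$ must coincide. So that argument only yields $\varnothing\notin{}^{a}\mathcal{I}$ (when some nonempty member exists) and never addresses incomparable pairs. Your fallback of ``presenting the equivalence at that level'' would therefore reproduce an invalid step. What can honestly be proved is one of two things:
\begin{itemize}
\item the necessity direction, together with the counterexample above; or
\item the corrected characterization: ${}^{a}\mathcal{I}$ is an anti-ideal iff $\varnothing\notin{}^{a}\mathcal{I}$ and, for distinct $A,B\in{}^{a}\mathcal{I}$, neither $A\Delta B$ nor $A\cup B$ lies in ${}^{a}\mathcal{I}$.
\end{itemize}
In the corrected form the chain clause is a consequence of the union condition, not a substitute for the $\Delta$-condition.
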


\begin{proof}
 Second condition of the anti-ideal can be characterized by the relation, $[(A\setminus B)\cup (B\setminus A)]\Delta (A\cap B) = A\cup B \notin {}^{a}\mathcal{I}$.

 For the first condition, suppose ${}^{a}\mathcal{I}$ an anti-ideal.
 Let $A\varsubsetneqq B \in  {}^{a}\mathcal{I}$. If $A\in {}^{a}\mathcal{I}$, then $B = A\cup B \in {}^{a}\mathcal{I}$, a contradiction. Thus   $A \notin  {}^{a}\mathcal{I}$. Next suppose $A\varsubsetneqq B \in  {}^{a}\mathcal{I}$ implies $A\notin {}^{a}\mathcal{I}$. Now, if $A\Delta B \in  {}^{a}\mathcal{I}$, then $B\setminus A \in {}^{a}\mathcal{I}$. Since $B\setminus A$ is a subset of $B$, then the last relation is a contradiction. Therefore, $A\Delta B \notin  {}^{a}\mathcal{I}$
\end{proof}

\begin{Proposition}
  Let $f: R\rightarrow R'$ be a ring homomorphism. Then $f({}^{a}\mathcal{I})$ may not be an anti-ideal on $R'$, where ${}^{a}\mathcal{I}$ is an anti-ideal on $R$ and $f({}^{a}\mathcal{I}) = \{f({}^{a}I):\; {}^{a}I \in  {}^{a}\mathcal{I} \}$.
\end{Proposition}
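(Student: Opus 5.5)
The statement only asserts that $f({}^{a}\mathcal{I})$ \emph{may not} be an anti-ideal. So I will prove it with a single explicit counterexample: a ring homomorphism $f:R\rightarrow R'$ and an anti-ideal ${}^{a}\mathcal{I}$ on $R$ whose image violates condition (i) or (ii) of the definition of anti-ideal.

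Any anti-ideal avoids the additive identity of its ring, since $a-a=0$ would violate (i). The plan is therefore to choose $f$ so that some element of ${}^{a}\mathcal{I}$ is sent to $0'$, the zero of $R'$. Then $f({}^{a}\mathcal{I})$ contains $0'$, and taking $a=b=0'$ in (i) gives $0'-0'=0'\in f({}^{a}\mathcal{I})$, so (i) fails.

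Concretely, I will take $R=\mathbb{Z}$ and the anti-ideal ${}^{a}\mathcal{I}=\{3\}$, which is one of the singleton anti-ideals in the earlier example for $(\mathbb{Z},+,.)$. I will check this directly:
\begin{enumerate}[(i)]
  \item $3-3=0\notin\{3\}$;
  \item condition (ii) is vacuous, since $\{3\}$ has no two distinct elements.
\end{enumerate}

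For the homomorphism I will use the canonical surjection $f:\mathbb{Z}\rightarrow\mathbb{Z}_3$, $f(n)=n \bmod 3$, which is a ring homomorphism. The zero map $\mathbb{Z}\rightarrow\mathbb{Z}$ would work equally well; I prefer the surjection so the example is not degenerate. Then $f({}^{a}\mathcal{I})=\{\bar 0\}$, and $\bar 0-\bar 0=\bar 0\in f({}^{a}\mathcal{I})$, so condition (i) fails and $f({}^{a}\mathcal{I})$ is not an anti-ideal on $\mathbb{Z}_3$.

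There is no real obstacle; the only care needed is that the chosen set really is an anti-ideal on $R$, which I verify directly as above. I may add a remark that a non-injective $f$ can also merge distinct elements in a way that breaks (ii). The example above already suffices for the proposition as stated.
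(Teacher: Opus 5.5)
Your counterexample is correct and uses essentially the same mechanism as the paper. The paper takes the zero map $f(x)=0$ on $C[0,1]$, so the image of an anti-ideal collapses onto the additive identity and condition (i) fails via $0-0=0$.

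Your instance ($\mathbb{Z}\rightarrow\mathbb{Z}_3$ with ${}^{a}\mathcal{I}=\{3\}$) is if anything slightly cleaner, for two reasons. It is a ring homomorphism even under unital conventions, where the zero map is not. And your element-level image $\{\bar 0\}$ is the reading under which the argument actually works: the paper's $\{\{0\}\}$, taken literally as a family in the power set ring $2^{R'}$, would satisfy both anti-ideal conditions, since $\{0\}\Delta\{0\}=\varnothing$.
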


\begin{Example}
   Let $R=C[0,1]$ and let $f:R\rightarrow R$ defined by $f(x)=0$ be a homomorphism. Then for any anti ideal ${}^{a}\mathcal{I}$, $f({}^{a}\mathcal{I})=\{\{0\}\}$ is not an anti ideal.
\end{Example}

\begin{Theorem}
  Let $f: R\rightarrow R'$ be a ring isomorphism. Then, for an anti-ideal ${}^{a}\mathcal{I}$ on $R$, $f({}^{a}\mathcal{I})$ is an anti-ideal on $R'$, where ${}^{a}\mathcal{I}$ is an anti-ideal on $R$.
\end{Theorem}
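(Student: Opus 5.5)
The natural reading is that ${}^{a}\mathcal{I}$ is an anti-ideal subset of the ring $R$ in the sense of the definition at the start of this section. The notation $f({}^{a}\mathcal{I})=\{f({}^{a}I)\}$ from the preceding Proposition is just the image set $f({}^{a}\mathcal{I})=\{f(a): a\in {}^{a}\mathcal{I}\}\subseteq R'$. I will treat it this way and verify both defining conditions for $f({}^{a}\mathcal{I})$ directly. The argument transports each condition through $f$ using that $f$ preserves $+$, $-$ and $\cdot$, and that $f$ is bijective. (In the power set ring case one can read the same argument via the characterization Lemma above, with $f$ preserving $\Delta$ and $\cap$.)

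For condition (i), take $x,y\in f({}^{a}\mathcal{I})$, say $x=f(a)$ and $y=f(b)$ with $a,b\in {}^{a}\mathcal{I}$. Then $x-y=f(a)-f(b)=f(a-b)$. Suppose $x-y\in f({}^{a}\mathcal{I})$. Then $f(a-b)=f(c)$ for some $c\in {}^{a}\mathcal{I}$, and injectivity of $f$ gives $a-b=c\in {}^{a}\mathcal{I}$. This contradicts (i) for ${}^{a}\mathcal{I}$, so $x-y\notin f({}^{a}\mathcal{I})$.

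For condition (ii), take $x\neq y$ in $f({}^{a}\mathcal{I})$ with $x=f(a)$ and $y=f(b)$. Since $f$ is a function, $a\neq b$. The homomorphism property gives $(x+y)+x\cdot y=f\bigl((a+b)+a\cdot b\bigr)$. If this element lay in $f({}^{a}\mathcal{I})$, injectivity would put $(a+b)+a\cdot b$ in ${}^{a}\mathcal{I}$, contradicting (ii) for ${}^{a}\mathcal{I}$.

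The only real obstacle is the step ``$f(u)\in f({}^{a}\mathcal{I})\Rightarrow u\in{}^{a}\mathcal{I}$''. It needs injectivity, and this is exactly where the zero-map example of the preceding Proposition fails. Surjectivity is not actually needed for the two conditions. It matters only for the paper's phrasing that the image is an anti-ideal ``on $R'$'' in the sense of being transported under an isomorphism. I would note this remark explicitly after the proof.
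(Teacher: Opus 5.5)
Your proof is correct, but it proves the statement under a different reading from the one the paper's proof uses.

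The paper takes the members of ${}^{a}\mathcal{I}$ to be subsets $A,B$ of $R$. That is, it works in the power set ring, where subtraction is $\Delta$ and multiplication is $\cap$, so condition (ii) reads $(A\Delta B)\Delta(A\cap B)=A\cup B\notin{}^{a}\mathcal{I}$. It then uses bijectivity of $f$ to write $f(A)\Delta f(B)=f(A\Delta B)$ and $f(A)\cup f(B)=f(A\cup B)$, and pulls non-membership back through the induced set map $A\mapsto f(A)$. In that version the ring operations of $R$ are never used. Any injection $f$ works, because $A\mapsto f(A)$ is then an injective homomorphism of power set rings.

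You instead work with elements, which matches the definition of an anti-ideal as a subset of $(R,+,\cdot)$. There the homomorphism property of $f$ is genuinely needed, and you correctly isolate injectivity as the only other ingredient. Surjectivity plays no role in either reading, so your closing caveat about it is unnecessary. You also handle the distinctness clause in (ii) explicitly. The paper leaves it implicit, and without it $A\cup B=A$ would lie in ${}^{a}\mathcal{I}$.

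The two arguments share the same skeleton: push each defining expression through a map and pull membership back by injectivity. So your parenthetical remark is the right bridge, with two small corrections. It is the induced set map, not $f$ itself, that preserves $\Delta$ and $\cap$. And the paper checks the two defining conditions directly rather than through the characterization Lemma.
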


\begin{proof}
  Let $A',B'\in f({}^{a}\mathcal{I})$. Then, $f(A)=A'$ and $f(B)=B'$, where $A,\; B\in {}^{a}\mathcal{I}$. Now $A' \Delta B' = f(A) \Delta f(B) = (f(A)\setminus f(B))\cup (f(B)\setminus f(A)) = f((A\setminus B)\cup (B\setminus A))$, as $f$ is bijective. Since $A\Delta B \notin {}^{a}\mathcal{I}$, then $f(A\Delta B) \notin f({}^{a}\mathcal{I})$.  Next, $A\cup B = (A\Delta B)\Delta (A\cap B) \notin {}^{a}\mathcal{I}$, and hence, $f(A\cup B) = f(A)\cup f(B) = (f(A)\Delta f(B))\Delta (f(A)\cap f(B)) \notin f({}^{a}\mathcal{I})$.

\end{proof}

Local function due to the anti-ideal:
\begin{Definition}
  Let $G$ be a group and $A\subseteq G$. Define $A^{a\propto}=\{g\in G: H\cap A\in {}^{a}\mathcal{I}, \;\forall\; H\in \mathcal{G}(g)\}$.
\end{Definition}

This operator $( )^{a\propto}: 2^G \rightarrow 2^G$ is called anti-ideal operator.

\begin{Example}
  Let $G=\mathbb{Z}_{4}$ and ${}^{a}\mathcal{I}=\{\{1\}\}$. Then, for $A=\{1,2\}$, $A^{a\propto}=\varnothing$.
\end{Example}

\begin{Example}
  Let $G=\mathbb{Z}$ and ${}^{a}\mathcal{I}=\{\{0\}\}$. For $A=\{0,1\}$, $A^{a\propto}=\mathbb{Z}\setminus \{-1,0,1\}$.
\end{Example}

\begin{Proposition}
  Let $G$ be a group and $A\subseteq G$ be any set. Then for two anti-ideals ${}^{a}\mathcal{I}$, ${}^{a}\mathcal{J}$ with ${}^{a}\mathcal{I}\subseteq {}^{a}\mathcal{J}$, $A^{a\propto}({}^{a}\mathcal{I})\subseteq A^{a\propto}({}^{a}\mathcal{J})$.
\end{Proposition}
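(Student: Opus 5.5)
The plan is to argue directly from the definition of the anti-ideal operator, in the same way as the monotonicity results of the previous section. There the corresponding statement for ideals reversed inclusion; here the membership condition is positive, namely $H\cap A\in{}^{a}\mathcal{I}$ rather than $H\cap A\notin\mathcal{I}$. So enlarging the anti-ideal should enlarge the operator, and the argument is a pointwise containment chase.

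Fix $A\subseteq G$ and anti-ideals ${}^{a}\mathcal{I}\subseteq{}^{a}\mathcal{J}$ on $2^G$, and take $g\in A^{a\propto}({}^{a}\mathcal{I})$. By the definition of $A^{a\propto}$, for every subgroup $H\in\mathcal{G}(g)$ we have $H\cap A\in{}^{a}\mathcal{I}$. Since ${}^{a}\mathcal{I}\subseteq{}^{a}\mathcal{J}$ as collections of subsets of $G$, the same set satisfies $H\cap A\in{}^{a}\mathcal{J}$. This holds for every $H\in\mathcal{G}(g)$, so by definition $g\in A^{a\propto}({}^{a}\mathcal{J})$. As $g$ was arbitrary, $A^{a\propto}({}^{a}\mathcal{I})\subseteq A^{a\propto}({}^{a}\mathcal{J})$.

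The only point needing care is that we must not try to reduce to the cyclic subgroup $<g>$, as was done for ideals in the first theorem of Section 2. That reduction relied on the hereditary property of ideals, and an anti-ideal has the opposite behaviour: by the characterization lemma, proper subsets of members are \emph{not} members. So the argument has to quantify over all $H\in\mathcal{G}(g)$, exactly as in the definition. Done that way, no property of anti-ideals is used beyond the inclusion ${}^{a}\mathcal{I}\subseteq{}^{a}\mathcal{J}$, and there is no real obstacle.
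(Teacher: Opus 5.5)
Your argument is correct and is the intended one. The paper gives no proof of this proposition and treats it, like its ideal analogue, as immediate from the definition: $H\cap A\in{}^{a}\mathcal{I}\subseteq{}^{a}\mathcal{J}$ for every $H\in\mathcal{G}(g)$. Your caution about not reducing to $\langle g\rangle$ is sound, though this statement does not need it.
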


\begin{Proposition}
  The anti-star operator do not follow the monotonicity, that is, if $A\subseteq B$, then $A^{a\propto}\subseteq B^{a\propto}$ may or may not be hold.
\end{Proposition}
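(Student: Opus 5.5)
The statement is a "may or may not" claim, so I will prove it by exhibiting two concrete situations: one where monotonicity fails and one where it holds. The failure is the substantive half. A natural guess is that monotonicity holds automatically when $A=B$, but that is too trivial to be convincing, so I also want a genuine inclusion $A\subsetneqq B$ with $A^{a\propto}\subseteq B^{a\propto}$. Throughout I take ${}^{a}\mathcal{I}$ to be a single-element family $\{S\}$ with $S\neq\varnothing$. Such a family is trivially an anti-ideal by the characterization lemma for anti-ideals of $2^G$: there are no distinct pairs and no proper subsets inside the family. With this choice, $g\in A^{a\propto}$ exactly when $H\cap A=S$ for every subgroup $H\ni g$.

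\emph{Failure of monotonicity.} I would reuse the example $G=\mathbb{Z}$, ${}^{a}\mathcal{I}=\{\{0\}\}$, for which the paper records $\{0,1\}^{a\propto}=\mathbb{Z}\setminus\{-1,0,1\}$. Take $A=\{0\}$ and $B=\{0,1\}$. For any $g$ and any subgroup $H\ni g$, we have $0\in H$, so $H\cap A=\{0\}\in{}^{a}\mathcal{I}$. Hence $A^{a\propto}=\mathbb{Z}$. Then $A\subseteq B$, but $1\in A^{a\propto}$ while $1\notin B^{a\propto}$, because the subgroup $\langle 1\rangle=\mathbb{Z}$ meets $B$ in $\{0,1\}\neq\{0\}$. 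So $A^{a\propto}\not\subseteq B^{a\propto}$. I will verify the cited value of $\{0,1\}^{a\propto}$ directly, or at least the single fact $1\notin B^{a\propto}$, which is all the argument needs.

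\emph{Monotonicity holding.} I would keep the same $G$ and ${}^{a}\mathcal{I}$ and take $A=\{1\}\subseteq B=\{0,1\}$. Every subgroup $H$ contains $0$, so $H\cap A\subseteq\{1\}$ can never equal $\{0\}$. Therefore $A^{a\propto}=\varnothing$, which is contained in $B^{a\propto}$. Alternatively, $A=\{0,1\}$, $B=\{0,1,-1\}$ could serve, but the empty case is cleanest.

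The main obstacle is only bookkeeping. The definition quantifies over all subgroups containing $g$, including $\{0\}$ when $g=0$, so I must check $g=0$ separately in each computation. Beyond that, the argument is routine.
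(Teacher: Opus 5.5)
Your argument is correct and is essentially the paper's. The paper also takes $A=\{e\}$ lying in the anti-ideal, so that $A^{a\propto}=G$, together with a larger $B\notin{}^{a}\mathcal{I}$: it uses $G=\mathbb{Z}_4$, ${}^{a}\mathcal{I}=\{\{0\},\{2\}\}$, $B=\{0,2\}$, which gives $B^{a\propto}=\varnothing$. Your extra example showing that the inclusion can hold is a harmless addition.

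Do not rely on the cited value $\{0,1\}^{a\propto}=\mathbb{Z}\setminus\{-1,0,1\}$, however. Since $G\in\mathcal{G}(g)$ for every $g$ and $G\cap\{0,1\}=\{0,1\}\notin{}^{a}\mathcal{I}$, in fact $\{0,1\}^{a\propto}=\varnothing$. That still gives the single fact $1\notin B^{a\propto}$ your argument needs, but if you try to verify the cited value directly, as you plan to, you will find it is false.
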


\begin{Example}
  Let $G=\mathbb{Z}_{4}$ and ${}^{a}\mathcal{I}=\{\{0\},\{2\}\}$. Take $A=\{0\}$ and $B=\{0,2\}$, then $A^{a\propto}=\mathbb{Z}_{4}$ and $B^{a\propto}=\varnothing$.
\end{Example}

\begin{Proposition}
  The anti-star operator does not preserve under union operation, that is, $(A\cup B)^{a\propto}\neq A^{a\propto}\cup B^{a\propto}$.
\end{Proposition}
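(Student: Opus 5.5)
The plan is to prove the statement with an explicit counterexample. The natural choice reuses the group and anti-ideal from the example following the previous proposition: $G=\mathbb{Z}_{4}$ and ${}^{a}\mathcal{I}=\{\{0\},\{2\}\}$. For $A=\{0\}$ and $B=\{2\}$ we have $A\cup B=\{0,2\}$, and that example already gives $(A\cup B)^{a\propto}=\varnothing$. So it suffices to show that $A^{a\propto}\cup B^{a\propto}$ is nonempty.

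First I confirm that ${}^{a}\mathcal{I}$ is an anti-ideal, using the characterization lemma for anti-ideals of $2^{G}$. Neither member is a proper subset of the other. For the two distinct members, the union $\{0,2\}$ is not in ${}^{a}\mathcal{I}$. The symmetric differences are also not in ${}^{a}\mathcal{I}$: $\{0\}\Delta\{2\}=\{0,2\}$ and $X\Delta X=\varnothing$.

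Next I list the subgroups of $\mathbb{Z}_{4}$: $\{0\}$, $\{0,2\}$ and $\mathbb{Z}_{4}$. From this I read off $\mathcal{G}(0)$, which contains all three; $\mathcal{G}(2)=\{\{0,2\},\mathbb{Z}_4\}$; and $\mathcal{G}(1)=\mathcal{G}(3)=\{\mathbb{Z}_4\}$.

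Now I compute the two operators.
\begin{itemize}
\item For $A=\{0\}$, every subgroup $H$ contains $0$, so $H\cap A=\{0\}\in{}^{a}\mathcal{I}$ for every $H$. Hence $A^{a\propto}=\mathbb{Z}_{4}$.
\item For $B=\{2\}$, the element $0$ is excluded because $\{0\}\cap B=\varnothing\notin{}^{a}\mathcal{I}$.
\item For $g=2,1,3$, every subgroup in $\mathcal{G}(g)$ contains $2$, so $H\cap B=\{2\}\in{}^{a}\mathcal{I}$. Hence $B^{a\propto}=\{1,2,3\}$.
\end{itemize}
Therefore $A^{a\propto}\cup B^{a\propto}=\mathbb{Z}_{4}\neq\varnothing=(A\cup B)^{a\propto}$. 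This even shows that neither inclusion can be expected in general.

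There is no real obstacle. The only points needing care are the anti-ideal verification, including the degenerate case $A=B$ where the symmetric difference is $\varnothing$, and remembering that the trivial subgroup belongs to $\mathcal{G}(0)$, since that is what excludes $0$ from $B^{a\propto}$.
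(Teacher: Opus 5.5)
Your counterexample is correct and follows the paper's approach of refuting the equality with an explicit example; yours is the $\mathbb{Z}_4$ analogue of the paper's second example ($G=\mathbb{Z}$, ${}^{a}\mathcal{I}=\{\{0\}\}$, $A=\{0\}$, $B=\{2\}$), and your three computed values $A^{a\propto}=\mathbb{Z}_4$, $B^{a\propto}=\{1,2,3\}$ and $(A\cup B)^{a\propto}=\varnothing$ all check out against the definition. Your closing remark overreaches, though. Since $(A\cup B)^{a\propto}=\varnothing$, your example refutes only $A^{a\propto}\cup B^{a\propto}\subseteq(A\cup B)^{a\propto}$, and the reverse inclusion needs a separate example. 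The paper's first example ($\mathbb{Z}$, ${}^{a}\mathcal{I}=\{\{0,1\}\}$, $A=\{0\}$, $B=\{1\}$) does this, or in your own group you can take ${}^{a}\mathcal{I}=\{\{0,2\}\}$ with $A=\{0\}$, $B=\{2\}$, which gives $A^{a\propto}=B^{a\propto}=\varnothing$ but $(A\cup B)^{a\propto}=\{1,2,3\}$.
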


\begin{Example}
 Let $G=\mathbb{Z}$ and ${}^{a}\mathcal{I}=\{\{0,1\}\}$. Then for $A=\{0\}$ and $B=\{1\}$, $A^{a\propto}=\varnothing$ and $B^{a\propto}=\varnothing$, but $(A\cup B)^{a\propto}=\{0,1\}^{a\propto}=\{-1,0,1\}$. Thus, $(A\cup B)^{a\propto}\nsubseteq A^{a\propto}\cup B^{a\propto}$.

  Again, if we take the anti-ideal ${}^{a}\mathcal{I}=\{\{0\}\}$ and $A=\{0\}$, $B=\{2\}$, then $A^{a\propto}=\mathbb{Z}$ and $B^{a\propto}=\varnothing$, but $(A\cup B)^{a\propto}=\{0,2\}^{a\propto}=\mathbb{Z}\setminus\{-2,-1,0,1,2\}$. Thus, $A^{a\propto}\cup B^{a\propto}\nsubseteq (A\cup B)^{a\propto}$.
\end{Example}

\begin{Proposition}
  The anti-star operator does not preserve under intersection operation,  that is, $A^{a\propto}\cap B^{a\propto}\neq (A\cap B)^{a\propto}$.
\end{Proposition}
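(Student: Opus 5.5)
The statement is a ``may fail'' proposition, so I will prove it with one explicit counterexample in the style of the preceding examples. I will pick a small group and a small anti-ideal for which the sets $H\cap A$, $H\cap B$ and $H\cap(A\cap B)$ can be listed completely. The cleanest way to break the equality is to make $A\cap B$ a set whose traces on subgroups are \emph{not} in ${}^{a}\mathcal{I}$, while the traces of $A$ and $B$ individually are. By the definition of $A^{a\propto}$, the decisive subgroups are those containing a given $g$.

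I would take $G=\mathbb{Z}_4$ and ${}^{a}\mathcal{I}=\{\{0,1\},\{0,2\}\}$. First I check that this is an anti-ideal using the characterization lemma for the power set ring. Neither set is a proper subset of the other. Their union $\{0,1,2\}$ is not in ${}^{a}\mathcal{I}$. Proper subsets such as $\{0\},\{1\},\{2\},\varnothing$ are not in ${}^{a}\mathcal{I}$. For the symmetric-difference form, $\{1,2\}\notin{}^{a}\mathcal{I}$.

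Next I take $A=\{0,1\}$ and $B=\{0,2\}$, so that $A\cap B=\{0\}$. The subgroups of $\mathbb{Z}_4$ are $\{0\}$, $\{0,2\}$ and $\mathbb{Z}_4$.

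\textbf{Computing $(A\cap B)^{a\propto}$.} Every $g$ lies in $\mathbb{Z}_4$, and $\mathbb{Z}_4\cap\{0\}=\{0\}\notin{}^{a}\mathcal{I}$. Hence $(A\cap B)^{a\propto}=\varnothing$.

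\textbf{Computing $A^{a\propto}$.} For $g\in\{1,3\}$ the only subgroup containing $g$ is $\mathbb{Z}_4$, and $\mathbb{Z}_4\cap A=A\in{}^{a}\mathcal{I}$. So $1,3\in A^{a\propto}$. For $g=0$, the subgroup $\{0\}$ gives $\{0\}\notin{}^{a}\mathcal{I}$, so $0\notin A^{a\propto}$. For $g=2$, the subgroup $\{0,2\}$ gives $\{0\}$, so $2\notin A^{a\propto}$. Thus $A^{a\propto}=\{1,3\}$.

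\textbf{Computing $B^{a\propto}$.} The points $1,3$ lie only in $\mathbb{Z}_4$, and $\mathbb{Z}_4\cap B=B\in{}^{a}\mathcal{I}$, so $1,3\in B^{a\propto}$. The point $0$ is excluded because of the subgroup $\{0\}$. For $g=2$, the subgroups $\{0,2\}$ and $\mathbb{Z}_4$ both give trace $B$, so $2\in B^{a\propto}$. Thus $B^{a\propto}=\{1,2,3\}$.

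It follows that $A^{a\propto}\cap B^{a\propto}=\{1,3\}\neq\varnothing=(A\cap B)^{a\propto}$, which proves the proposition.

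The main obstacle is choosing an anti-ideal that is legitimate. Containment of two members can fail condition (i), and closure under unions must be avoided. The ring-theoretic definition (ii) must also be checked via the lemma: the sum $A\Delta B$ and the product $A\cap B$ combine to $A\cup B$. This is why I verify $\{0,1,2\}\notin{}^{a}\mathcal{I}$ first, before computing any traces.
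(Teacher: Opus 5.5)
Your counterexample is correct, but it is not the paper's. The paper takes $G=\mathbb{Z}$, the one-member anti-ideal ${}^{a}\mathcal{I}=\{\{0\}\}$, and the same sets $A=\{0,1\}$, $B=\{0,2\}$. Since every subgroup contains $0$, it gets $(A\cap B)^{a\propto}=\{0\}^{a\propto}=\mathbb{Z}$. So the paper shows that $(A\cap B)^{a\propto}\subseteq A^{a\propto}\cap B^{a\propto}$ can fail, which is the inclusion monotonicity guarantees for the ideal operator $(\cdot)^{\propto}$. You instead place $A$ and $B$ themselves in a two-member anti-ideal on $\mathbb{Z}_4$ and get the opposite failure, $A^{a\propto}\cap B^{a\propto}=\{1,3\}\nsubseteq\varnothing=(A\cap B)^{a\propto}$. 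Read together, the two examples show that neither inclusion holds in general.

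Your design also has a concrete advantage. Because $G\in\mathcal{G}(g)$ for every $g$, the definition forces $A^{a\propto}=\varnothing$ unless $A\in{}^{a}\mathcal{I}$. Putting $A$ and $B$ into ${}^{a}\mathcal{I}$ is exactly what makes their anti-local functions nonempty, and every trace can be checked by hand. Applied to the paper's example, the same observation gives $A^{a\propto}=B^{a\propto}=\varnothing$, not the cofinite sets printed there. The paper's conclusion $\varnothing\neq\mathbb{Z}$ still holds, but your computation is the more reliable one.

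Finally, the paper's characterization lemma for anti-ideals holds only in the ``only if'' direction. The family $\{\{1,2\},\{2,3\},\{1,3\}\}$ is an antichain containing none of its pairwise unions, yet $\{1,2\}\Delta\{2,3\}=\{1,3\}$ lies in it. So what actually certifies ${}^{a}\mathcal{I}$ is your direct check of $\varnothing$, $\{1,2\}$ and $\{0,1,2\}$ against the definition, not the lemma.
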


\begin{Example}
  Let $G=\mathbb{Z}$ and ${}^{a}\mathcal{I}=\{\{0\}\}$. Then for $A=\{0,1\}$ and $B=\{0,2\}$, $A^{a\propto}=\mathbb{Z} \setminus \{-1, 0, 1 \}$, and $B^{a\propto}= \mathbb{Z} \setminus \{-2, -1, 0, 1, 2 \}$, then $A^{a\propto}\cap B^{a\propto}=\mathbb{Z} \setminus \{-2, -1, 0, 1, 2 \}$, but $(A\cap B)^{a\propto}=\{0\}^{a\propto}=\mathbb{Z}$. Thus, $(A\cap B)^{a\propto}\nsubseteq A^{a\propto}\cap B^{a\propto}$.
\end{Example}

\begin{Proposition}
  Let $G$ be any group and ${}^{a}\mathcal{I}$ be an anti-ideal on $2^{G}$. Then, for any subset $A$ of $G$ and ${}^{a}I\in {}^{a}\mathcal{I}$, $A^{a\propto}\neq(A\setminus I)^{a\propto}\neq(A\cup I)^{a\propto}$.
\end{Proposition}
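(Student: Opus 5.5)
This is a negative statement: the equalities $A^{a\propto}=(A\setminus I)^{a\propto}=(A\cup I)^{a\propto}$, which hold for ideals by the earlier theorem on $(A\cup I)^{\propto}$, fail in general for anti-ideals. So the plan is to exhibit one concrete group, anti-ideal, set $A$ and member $I\in{}^{a}\mathcal{I}$ for which all three sets are pairwise different. We then compute them directly from the definition of $A^{a\propto}$, using the subgroups $<g>\in\mathcal{G}(g)$ and the larger subgroups containing $g$.

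I would work in $G=\mathbb{Z}$ with ${}^{a}\mathcal{I}=\{\{0\}\}$, the same anti-ideal as in the earlier examples; it is an anti-ideal by the characterizing lemma of anti-ideals on $2^G$, since it is a singleton. Take $I=\{0\}$.

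The key computation is the general rule for this anti-ideal. For $g\in\mathbb{Z}$ and any set $S$, we have $g\in S^{a\propto}$ iff $H\cap S=\{0\}$ for every subgroup $H=d\mathbb{Z}$ with $d\mid g$.
\begin{enumerate}[(i)]
\item If $0\notin S$, this forces $S^{a\propto}=\varnothing$, because $H\cap S=\{0\}$ would require $0\in S$.
\item If $0\in S$, the condition reduces to: no divisor $d$ of $g$ divides a nonzero element of $S$. Since $d=\pm1$ always divides $g$, we get $S^{a\propto}=\varnothing$ as soon as $S$ has a nonzero element, and $S^{a\propto}=\mathbb{Z}$ when $S=\{0\}$.
\end{enumerate}
This seems to contradict the paper's earlier example $\{0,1\}^{a\propto}=\mathbb{Z}\setminus\{-1,0,1\}$. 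That example evidently uses only $H=<g>$, or some similar reading. I would follow the paper's convention in that example and check with $<g>$ (the criterion analogous to the earlier theorem for ideals). Under that reading, $g\in S^{a\propto}$ iff $g\mathbb{Z}\cap S=\{0\}$.

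Now choose $A=\{0,1\}$. Then the three sets are as follows.
\begin{enumerate}[(i)]
\item $A^{a\propto}=\mathbb{Z}\setminus\{-1,0,1\}$, as computed in the paper.
\item $A\setminus I=\{1\}$ does not contain $0$, so $(A\setminus I)^{a\propto}=\varnothing$.
\item $A\cup I=A$, so $(A\cup I)^{a\propto}=A^{a\propto}$.
\end{enumerate}
Item (iii) shows this choice of $A$ is not enough for the second inequality. To separate all three sets, I would instead take $A=\{1,2\}$, which omits $0$. Then:
\begin{enumerate}[(i)]
\item $A^{a\propto}=\varnothing$.
\item $A\setminus I=A$, so $(A\setminus I)^{a\propto}=\varnothing$.
\item $A\cup I=\{0,1,2\}$, and $g\mathbb{Z}\cap\{0,1,2\}=\{0\}$ iff $|g|\ge 3$, so $(A\cup I)^{a\propto}=\mathbb{Z}\setminus\{-2,-1,0,1,2\}$.
\end{enumerate}

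No single $A$ in this family separates all three sets at once. The statement is best read as ``the equalities need not hold'', and I would present the two examples above as witnesses:
\begin{enumerate}[(i)]
\item $A=\{0,1\}$ shows $A^{a\propto}\ne(A\setminus I)^{a\propto}$;
\item $A=\{1,2\}$ shows $(A\setminus I)^{a\propto}\ne(A\cup I)^{a\propto}$.
\end{enumerate}
If a single witness is wanted, I would look for a richer anti-ideal, for example ${}^{a}\mathcal{I}=\{\{0\},\{2\}\}$ in $\mathbb{Z}_4$ as in the earlier example, with $I=\{2\}$ and a suitable $A$. A first candidate is $A=\{0\}$, for which $A^{a\propto}=\mathbb{Z}_4$ and $(A\cup I)^{a\propto}=\varnothing$; I would then tune $A$ so that $A\setminus I$ gives a third value.

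The main obstacle is the ambiguity in the definition: all subgroups versus only $<g>$. As shown above, these give different answers, so I would fix the $<g>$ reading consistent with the paper's own examples and state it explicitly before computing.
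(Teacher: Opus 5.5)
\textbf{Gap: your witnesses fail under the paper's definition.} Your two witnesses ($A=\{0,1\}$ and $A=\{1,2\}$) only work for an operator that is not the one the proposition is about. The paper defines $A^{a\propto}$ by quantifying over \emph{all} $H\in\mathcal{G}(g)$. The reduction to $H=\langle g\rangle$ was proved only for ideals, and it rests on heredity, which anti-ideals lack: by the characterizing lemma, a proper subset of a member is never a member.

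Your items (i)--(ii) are the correct computation for the operator as defined. Since $\mathbb{Z}\in\mathcal{G}(g)$ for every $g$, with ${}^{a}\mathcal{I}=\{\{0\}\}$ one gets $S^{a\propto}=\mathbb{Z}$ if $S=\{0\}$ and $S^{a\propto}=\varnothing$ otherwise. Hence $\{0,1\}^{a\propto}=\varnothing=\{1\}^{a\propto}$ and $\{1,2\}^{a\propto}=\varnothing=\{0,1,2\}^{a\propto}$, so neither witness separates anything. Switching to the $\langle g\rangle$ reading to match a paper example that is itself inconsistent with the definition does not prove the stated proposition.

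Even under your reading, the values are off at $g=0$. Since $\langle 0\rangle=\{0\}$, the point $0$ lies in both $\{0,1\}^{a\propto}$ and $\{0,1,2\}^{a\propto}$, so your ``iff $|g|\ge 3$'' is false. This is harmless for the inequalities.

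\textbf{The repair.} Choose witnesses that work under both readings; your own computation supplies them. Note that $\varnothing$ is never in an anti-ideal (condition (i) with $a=b$), so $\varnothing^{a\propto}=\varnothing$. With $I=\{0\}$:
\begin{enumerate}[(i)]
\item $A=\{0\}$ gives $A^{a\propto}=\mathbb{Z}$ while $(A\setminus I)^{a\propto}=\varnothing^{a\propto}=\varnothing$;
\item $A=\varnothing$ gives $A^{a\propto}=\varnothing$ while $(A\cup I)^{a\propto}=\{0\}^{a\propto}=\mathbb{Z}$.
\end{enumerate}

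This matches the shape of the paper's argument, which also uses two separate examples rather than one $A$ separating all three sets. It works in $\mathbb{Z}$ with ${}^{a}\mathcal{I}=\{\{0\},\{2\}\}$ and $I=\{0\}$, and compares $\{0,2\}^{a\propto}$ with $\{2\}^{a\propto}$. It uses $A=\{0,2\}$ for the difference and $B=\{2\}$ for the union.

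The paper's stated values are not exactly right either, but its inequality survives the literal definition. There $\{0,2\}^{a\propto}=\varnothing$, because $\mathbb{Z}\cap\{0,2\}\notin{}^{a}\mathcal{I}$, while $\{2\}^{a\propto}=\{\pm1,\pm2\}$. Under the $\langle g\rangle$ reading the two sets also differ. So the search for a single witness in $\mathbb{Z}_4$ is unnecessary.
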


\begin{Example}
  Let $G=\mathbb{Z}$ and ${}^{a}\mathcal{I}=\{\{0\},\{2\}\}$. Then, for $A=\{0,2\}$ and ${}^{a}I=\{0\}$, $A^{a\propto}=\mathbb{Z} \setminus \{-2, -1, 0, 1, 2 \}$, but $(A\setminus {}^{a}I)^{a\propto}=\{2\}^{a\propto}=\{-2, -1, 0, 1, 2 \}$. Also, if we take $B=\{2\}$, then $B^{a\propto}=\{-2, -1, 0, 1, 2 \}$, but $(B\cup {}^{a}I)^{a\propto}=\{0,2\}^{a\propto}=\mathbb{Z} \setminus \{-2, -1, 0, 1, 2 \}$.
\end{Example}

\begin{Note}
  Let $f:G\rightarrow G$ be a group homomorphism and ${}^{a}\mathcal{I}$ is an anti-ideal on $2^{G}$. Then for any subset $A$ of $G$, $f(A^{a\propto}({}^{a}\mathcal{I}))\neq(f(A))^{a\propto}({}^{a}\mathcal{I})$.
\end{Note}

\begin{Example}
  Let $f:\mathbb{Z}\rightarrow \mathbb{Z}$ defined by $f(n)=2n$ be a group homomorphism. Take the anti-ideal ${}^{a}\mathcal{I}=\{\{0\}\}$ and $A=\{0,1\}$. Then $A^{a\propto}({}^{a}\mathcal{I})=\mathbb{Z}\setminus \{-1, 0, 1\}$, $f(A^{a\propto}({}^{a}\mathcal{I}))=2\mathbb{Z}\setminus \{-2, 0,2\}$. Now, $f(A)=\{0,2\}$, $(f(A))^{a\propto}({}^{a}\mathcal{I})=\{0,2\}^{a\propto}({}^{a}\mathcal{I})=\mathbb{Z}\setminus\{-2, -1, 0, 1, 2\}$.
\end{Example}

{\bf Conclusion}: The paper has been constructed two ideas, one is topology on a group and another is, anti-ideal on a ring. This paper  introduced a new branch in the combine field of group, ring and topology.

\end{document}